\documentclass[11pt,letterpaper]{amsart}
\usepackage{amsmath}
\usepackage{amssymb}
\usepackage{amsthm}
\usepackage{hyperref}
\usepackage[noabbrev,capitalize]{cleveref}
\usepackage{mathrsfs}
\usepackage{mathtools}
\usepackage{slashed}
\usepackage{tikz-cd}
\usepackage[shortlabels]{enumitem}

\setenumerate{label=(\roman*)}

\DeclareMathOperator{\Ric}{Ric}
\DeclareMathOperator{\biRic}{biRic}

\def\sideremark#1{\ifvmode\leavevmode\fi\vadjust{\vbox to0pt{\vss
 \hbox to 0pt{\hskip\hsize\hskip1em
 \vbox{\hsize3cm\tiny\raggedright\pretolerance10000
 \noindent #1\hfill}\hss}\vbox to8pt{\vfil}\vss}}}

\newtheorem{theorem}{Theorem}[section]
\newtheorem{proposition}[theorem]{Proposition}
\newtheorem{lemma}[theorem]{Lemma}
\newtheorem{corollary}[theorem]{Corollary}

\theoremstyle{definition}
\newtheorem{definition}[theorem]{Definition}

\theoremstyle{remark}
\newtheorem{remark}[theorem]{Remark}

\numberwithin{equation}{section}
 \usepackage{ stmaryrd }
\usepackage{import}

\allowdisplaybreaks
\makeatletter
\@namedef{subjclassname@2020}{\textup{2020} Mathematics Subject Classification}
\makeatother

\begin{document}

\title[Spectral nonnegative Ricci curvature]{A splitting theorem for manifolds with  nonnegative spectral Ricci curvature and mean convex boundary}

\author{Han Hong}
\address{Department of Mathematics and statistics \\ Beijing Jiaotong University \\ Beijing \\ China, 100044}
\email{hanhong@bjtu.edu.cn}

\author{Gaoming Wang}
\address{Beijing Institute of Mathematical Sciences and Applications \\ Beijing \\ China, 101408}
\address{Yau Mathematical Sciences Center, Tsinghua University, Beijing, China, 100084}
\email{wanggaoming@bimsa.cn}

\begin{abstract}
We prove a splitting theorem for a smooth noncompact manifold with (possibly noncompact) boundary. We show that if a noncompact manifold of dimension $n\geq 2$ has $\lambda_1(-\alpha\Delta+\operatorname{Ric})\geq 0$ for some $\alpha<\frac{4}{n-1}$ and mean convex boundary, then it is either isometric to $\Sigma\times \mathbb{R}_{\geq 0}$ for a closed manifold $\Sigma$ with nonnegative Ricci curvature or it has no interior ends.
\end{abstract}
\maketitle

\section{Introduction}
The classical Cheeger-Gromoll splitting theorem \cite{cheegergromoll} states that a complete noncompact manifold with at least two ends and nonnegative Ricci curvature must be isometric to $ \Sigma\times \mathbb{R}_+$ where $\Sigma$ is a closed manifold with nonnegative Ricci curvature. This result has been extended to manifolds with compact boundary by Kasue \cite{kasue} and Croke-Kleiner \cite{croke-bruce}. More specifically, a noncompact manifold with a compact mean convex boundary and nonnegative Ricci curvature is isometric to $\mathbb{R}_+\times \Sigma$. The compactness of the boundary is essential, as illustrated by the example of the catenoid, which bounds a domain in $\mathbb{R}^n$.

Recently, Antonelli-Pozzetta-Xu \cite{antonelli-pozzetta-xu} extended the Cheeger-Gromoll splitting theorem to manifolds with nonnegative Ricci curvature in the spectral sense (to be defined below). Catino-Mari-Mastrolia-Roncoroni \cite{catino-Mari-Mastrolia-Roncoroni} proved the same theorem using a different approach. Many classical and important theorems in Riemannian geometry have been generalized under this framework, for example, \cite{antonelli-xu}. In particular, these extensions have played a significant role in the resolution of the stable Bernstein problem \cite{chodoshliR4anisotropic,chodosh-li-stryker,mazet}. The goal of this paper is to establish a spectral splitting theorem for manifolds with boundary.

Let $(M,g)$ be a noncompact $n$-dimensional Riemannian manifold with boundary $\partial M$. Denote the outward unit normal vector of $\partial M$ by $\eta$. One can define the Lipschitz function $\operatorname{Ric}: M \rightarrow \mathbb{R}$ by
\[
\operatorname{Ric}(x) = \inf\{\operatorname{Ric}(v, v) : g(v, v) = 1 \text{ and } v \in T_xM\}.
\]
For a constant $\alpha \geq 0$, one says that $M$ has nonnegative $\alpha$-Ricci curvature in the spectral sense if $\lambda_1(-\alpha \Delta + \operatorname{Ric}) \geq 0$.

Equivalently, this can be expressed as
\[
\int_M \alpha |\nabla \varphi|^2 + \operatorname{Ric} \cdot \varphi^2 \geq 0
\]
for any compactly supported function $\varphi$, or there exists a positive function $u \in C^{2,\beta}(M)$ satisfying
\begin{equation}\label{equation 1 of u}
    -\alpha \Delta u + \operatorname{Ric} \cdot u = 0 \quad \text{on } M,
\end{equation}
\begin{equation}\label{equation 2 of u}
    \frac{\partial u}{\partial \eta} = 0 \quad \text{on } \partial M.
\end{equation}
This equivalence follows from arguments similar to those of Fischer-Colbrie and Schoen \cite{Fischer-Colbrie-Schoen-The-structure-of-complete-stable}.

The boundary $\partial M$ consists of components that may be compact or noncompact. As in the case of geodesically complete manifolds, one can define the notion of ends. 
Fix a point $p \in \partial M$, and choose a large subset containing $p$, e.g., $B_R(p)$ with large $R$, such that the complement $M \setminus B_R(p)$ has finitely many connected unbounded components. These components are called \textit{ends} with respect to $B_R(p)$.
There are two types of ends: some have a boundary portion belonging to $\partial M$, while others only have a boundary belonging to $\partial B_R(p)$. The former are called \textit{boundary ends}, and the latter are called \textit{interior ends}. In dimension two, these concepts have been introduced to study noncompact capillary surfaces by the author \cite{hongcrelle}. An example is that one can imagine attaching very tiny half-cylinders to a long strip at integer interior points. See also \cite{wuyujie,hong24} for some discussions in higher dimensions.

A manifold is said to have mean convex boundary if the mean curvature of the boundary with respect to outward normals is nonnegative, i.e., $h(\cdot,\cdot)=g(\nabla_{\cdot}\eta,\cdot)$ is nonnegative. The main result of this paper is the following.
 
 \begin{theorem}\label{maintheorem}
    Let $(M, g)$ be a smooth noncompact $n$-dimensional Riemannian manifold with nonnegative $\alpha$-Ricci curvature in the spectral sense and mean convex boundary. Assume that $\alpha < \frac{4}{n-1}$. Then, either
    \begin{itemize}
        \item[1.] $M$ is isometric to a product space $\Sigma \times \mathbb{R}_{\geq 0}$ with a compact manifold $\Sigma$ satisfying $\operatorname{Ric}_\Sigma \geq 0$, or
        \item[2.] $M$ has no interior ends.
    \end{itemize}
\end{theorem}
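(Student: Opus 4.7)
I argue by contradiction: assume $M$ has an interior end $E$, and derive an isometric splitting $M\cong\Sigma\times\bR_{\geq 0}$ with $\Sigma$ a compact manifold satisfying $\Ric_\Sigma\geq 0$. The first step produces a perpendicular ray into $E$. Since $E$ is an interior end, $\partial M\cap E=\emptyset$, so $d(\cdot,\partial M)$ is unbounded on $E$. Pick $p_i\in E$ with $d(p_i,\partial M)\to\infty$, let $q_i\in\partial M$ realize the distance, and consider the minimizing segment $\gamma_i$ from $q_i$ to $p_i$; each $\gamma_i$ is perpendicular to $\partial M$ at $q_i$ and satisfies $d(\gamma_i(s),\partial M)=s$ along its length. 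After a standard subsequence extraction (with the $q_i$ confined to a bounded portion of $\partial M$), the $\gamma_i$ converge to a ray $\gamma\colon[0,\infty)\to M$ with $\gamma(0)=q\in\partial M$, $\gamma'(0)=-\eta(q)$, $d(\gamma(s),\partial M)=s$ for all $s\geq 0$, and $\gamma$ eventually contained in $E$.

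Next, I convert the spectral condition into a weighted pointwise bound. Let $u>0$ solve \eqref{equation 1 of u}--\eqref{equation 2 of u} and set $\psi=\log u$, so that $\alpha(\Delta\psi+|\nabla\psi|^2)=\Ric$. For parameters $\beta>0$ and $N>n$ to be optimized, set $\phi=-\beta\psi$, introduce the drift Laplacian $\Delta_\phi=\Delta-\langle\nabla\phi,\nabla\cdot\rangle$, and form the Bakry-Emery tensor $\Ric_\phi^N=\Ric+\nabla^2\phi-\tfrac{1}{N-n}d\phi\otimes d\phi$. A Bochner-type manipulation on $\psi$ combined with the equation for $u$, in the spirit of the spectral splitting theorems of Antonelli-Pozzetta-Xu and Catino-Mari-Mastrolia-Roncoroni (and of the Chodosh-Li analysis of the stable Bernstein problem), promotes the spectral hypothesis to a pointwise bound $\Ric_\phi^N\geq 0$, admissible precisely in the range $\alpha<\tfrac{4}{n-1}$ after optimizing $(\beta,N)$. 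The Neumann condition $\partial_\eta u=0$ yields $\partial_\eta\phi=0$, so the weighted mean curvature $H_\phi=H-\langle\nabla\phi,\eta\rangle=H\geq 0$ of $\partial M$ is nonnegative.

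Let $b(x)=\lim_{t\to\infty}(t-d(x,\gamma(t)))$ be the Busemann function of $\gamma$ and $\rho(x)=d(x,\partial M)$. Weighted Laplacian comparison under $\Ric_\phi^N\geq 0$ gives $\Delta_\phi b\geq 0$ in the barrier sense, while weighted mean-convexity of $\partial M$ gives $\Delta_\phi\rho\leq 0$. Thus $w=b-\rho$ is $\Delta_\phi$-subharmonic on $M$. For any $y\in\partial M$, $d(y,\gamma(t))\geq\rho(\gamma(t))=t$, so $b(y)\leq 0$; combining with the $1$-Lipschitz estimate $b(x)\leq b(y_x)+d(x,y_x)=b(y_x)+\rho(x)$ for $y_x$ a foot of $x$ on $\partial M$, one obtains $w(x)\leq b(y_x)\leq 0$ throughout $M$. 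Since $w$ vanishes at every interior point of the ray $\gamma$, the strong maximum principle for $\Delta_\phi$ forces $w\equiv 0$, and the equality case of the weighted Laplacian comparison yields $\nabla^2 b=0$, $|\nabla b|=1$, and (via the effective-dimension analysis) $\nabla\phi=0$; hence $u$ is constant and $\Ric(\nabla b,\nabla b)=0$ pointwise. A de Rham argument produces the isometric splitting $M\cong\Sigma\times\bR_{\geq 0}$ with $\Sigma=b^{-1}(0)=\partial M$ totally geodesic and inheriting $\Ric_\Sigma\geq 0$; compactness of $\Sigma$ follows because $\Sigma\times\bR_{\geq 0}$ has only boundary ends, contradicting the existence of $E$ if $\Sigma$ were noncompact.

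The main obstacle is the conversion of the \emph{spectral} bound on $-\alpha\Delta+\Ric$ into the \emph{pointwise} Bakry-Emery bound $\Ric_\phi^N\geq 0$ applicable for weighted Laplacian comparison: the Bochner identity for $\log u$ produces a cross term $|\nabla\log u|^2$ whose coefficient can be absorbed with a definite sign only when $\alpha<\tfrac{4}{n-1}$, and this threshold is sharp for the free parameters $(\beta,N)$. A secondary technical challenge is ensuring that the weighted Laplacian comparison for the Busemann function of a ray, the global inequality $w\leq 0$ up to $\partial M$, and the strong maximum principle all propagate across the Neumann boundary condition without loss of regularity at the endpoint $q\in\partial M$.
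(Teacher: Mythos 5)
There is a genuine gap at the step you yourself identify as the main obstacle: the passage from the spectral hypothesis to a \emph{pointwise} Bakry--Emery bound $\Ric_\phi^N\geq 0$ with $\phi=-\beta\log u$ is not just unproved, it is unavailable in general. Pointwise, the equation $-\alpha\Delta u+\Ric\cdot u=0$ only ties the lowest Ricci eigenvalue to the full trace $\Delta u$; it gives no control whatsoever on a single Hessian entry $\nabla^2 u(v,v)$ in a fixed direction $v$. Since
$\Ric_\phi^N(v,v)=\Ric(v,v)-\beta\,u^{-1}\nabla^2u(v,v)+\beta\,u^{-2}(\nabla_vu)^2-\tfrac{\beta^2}{N-n}u^{-2}(\nabla_vu)^2$,
one can make $\nabla^2u(v,v)$ large and positive while keeping $\Delta u$ (hence the only available lower bound on $\Ric(v,v)$) fixed, so no choice of $(\beta,N)$ yields $\Ric_\phi^N\geq 0$ from the spectral condition alone; in particular the threshold $\alpha<\tfrac{4}{n-1}$ cannot be recovered this way. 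The references you invoke (Antonelli--Pozzetta--Xu, Catino--Mari--Mastrolia--Roncoroni, Chodosh--Li) do not establish such a pointwise curvature--dimension condition either; their arguments, like the one in this paper, are integrated along curves. Concretely, in the paper the exponent restriction arises from the second variation of the weighted length $L_u^\alpha(\gamma)=\int_\gamma u^\alpha\,ds$ along $L_u^\alpha$-minimizing curves: summing the normal variations turns the Hessian contribution into $\Delta u-\nabla^2u(\partial s,\partial s)$, the $u_{ss}$ part is integrated by parts \emph{along the curve}, and after the substitution $\phi=u^{-\alpha/2}\psi$ and Cauchy--Schwarz one gets the key inequality with a good sign exactly when $\alpha<\tfrac{4}{n-1}$ (Proposition \ref{keyinequality}). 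This integration by parts along the curve has no pointwise analogue, which is precisely why your conversion step fails. Everything downstream of it (weighted Laplacian comparison for the Busemann function, $\Delta_\phi\rho\leq 0$, the maximum principle for $b-\rho$, and the equality analysis) depends on that pointwise bound, so the argument as written does not go through.

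For comparison, the paper's route avoids Busemann functions altogether: from an interior end it produces a complete $L_u^\alpha$-minimizing curve in the free boundary sense (Proposition \ref{existence of geodesic ray}), then uses a curve-capture/perturbation argument (Theorem \ref{theorem of passing}) to pass such a curve through \emph{every} point of $M$; the rigidity statement for the second variation (Proposition \ref{rigidity proposition}) forces $\nabla u=0$ along each such curve, hence $u$ is constant and $\Ric\geq 0$ holds pointwise, after which the splitting follows from the known pointwise result (Sakurai, building on Kasue and Croke--Kleiner). If you want to salvage your Busemann-function strategy, you would need a comparison principle that works directly under the spectral (integrated) hypothesis rather than a pointwise CD condition, which is a substantially different and harder task; your ray construction in the first paragraph and the reduction to the pointwise case are fine, but the bridge between them is missing.
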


Notice that the compactness of $\partial M$ is not assumed a priori but rather emerges as a consequence of the splitting result. Regarding to the sharpness of the assumptions, we make two comments here. Both examples below are simple truncations of examples provided in \cite{antonelli-pozzetta-xu}. 

(1) The range for $\alpha$ is sharp. Otherwise, consider the manifold $((-\infty,0]\times \mathbb{T}^{n-1},dt^2+e^{2t}g_{\mathbb{T}^n})$. This is a quotient of the domain $\{x_n\geq 1\}$ in the upper half-space model of hyperbolic space. The mean curvature of the boundary is $n-1$ and $\Ric=-(n-1)$. For $\alpha\geq \frac{4}{n-1}$, take $u=e^{-\frac{n-1}{2}t}$ and calculate
\[-\alpha \Delta u+\Ric \cdot u\geq0.\]
This manifold has one interior end, but it does not split isometrically.

(2) The assumption of at least one interior end in item 1 of Theorem \ref{maintheorem} guarantees the existence of a geodesic ray. However, the existence of a geodesic ray does not lead to the splitting.  Consider a sufficiently small compact perturbation of $\mathbb{R}^n_+$ as in \cite[Remark 4.2]{antonelli-pozzetta-xu}. The new manifold satisfies nonnegative $\alpha$-Ricci curvature in the spectral sense, and the boundary has nonnegative mean curvature. It contains geodesic rays, but it does not split isometrically. Note that this manifold only has one boundary end.

The proof of Theorem \ref{maintheorem} relies on two key techniques: the second variation of the weighted length functional and the curve-capture argument.
First, we analyze the second variation of the weighted length functional. By computing the second variation, we establish that the associated weighted function remains constant along any weighted minimizing line or ray. Additionally, it implies that the Ricci curvature is nonnegative along such curves.

Next, we employ the curve-capture technique to construct a weighted minimizing ray or line passing through any given point $p\in M$. This construction is analogous to the prescribed surface method used to resolve the Milnor conjecture in dimension 3 \cite{liugang}. Further applications of this technique in splitting theorems can be found in \cite{anderson-rodriguez,Carlotto-Chodosh-Eichmair-PMT,chodoshsplittingtheorem,zhu1,antonelli-pozzetta-xu,hongwang-splitting}.
In particular, our approach provides a distinct strategy compared to those in \cite{antonelli-pozzetta-xu,catino-Mari-Mastrolia-Roncoroni}.
Finally, we conclude that the Ricci curvature is non-negative on $M$ and finish the proof by the result due to Sakurai \cite{sakurai2017}.

\vskip.1cm
In the following, we give two corollaries of the main result. The first one is a generalization of \cite[Theorem 2, $\delta=0$ case]{croke-bruce}.

\begin{corollary}
    Let \(M\) be a noncompact $n$-dimensional manifold with compact, mean convex boundary. Suppose it supports nonnegative \(\alpha\)-Ricci curvature in the spectral sense for \(\alpha < \frac{4}{n-1}\),  then $M$ is isometric to a product space $\Sigma \times \mathbb{R}_{\geq 0}$ with a compact manifold $\Sigma$ satisfying $\operatorname{Ric}_\Sigma \geq 0$.
\end{corollary}

We now show the second corollary regarding the stable CMC hypersurface in a Riemannian manifold. Recall that, given two orthonormal vectors $x,y\in T_p N$,  the biRic curvature operator is defined as
\[\operatorname{biRic}(x,y):=\operatorname{Ric}(x,x)+\operatorname{Ric}(y,y)-R(x,y,x,y).\]
One can define the biRic curvature as
\[\operatorname{biRic}(p)=\min\{\operatorname{biRic}(x,y):\ x, y\ \text{are orthonormal vectors in}\ T_pN\}.\]
The biRic curvature was first raised by Shen-Ye \cite{shenyingyerugang} to study stable minimal hypersurfaces. It and its variants have played an important role in the resolution of the stable Bernstein problem in $\mathbb{R}^{n+1}$ (see \cite{Chodosh-Li-Minter-Stryker,mazet}).
We say that $M$ satisfies $\biRic+c\geq 0$ if $\operatorname{biRic}(p)+c\geq 0$ for any point $p$ on $N$.
We can show the following.
 \begin{corollary}\label{corollary1}
     Let $(N,\partial N, g)$ be a smooth $(n+1)$-dimensional Riemannian manifold with boundary and $n\leq 4$. Assume that $\operatorname{biRic}+\frac{5-n}{4}H^2\geq 0$ and $\partial N$ is mean convex. Then, for any noncompact two-sided stable free boundary CMC immersion $(M,\partial M) \rightarrow (N,\partial N,g)$ with the mean curvature $H$, either
     \begin{itemize}
         \item[1,] $M$ splits isometrically as $\Sigma\times \mathbb{R}_{\geq 0}$, where $\operatorname{Ric}_\Sigma\geq 0.$ Moreover, the second fundamental form satisfies $A(\partial_t,\partial_t)=-\frac{n-3}{2}H$ where $\partial_t$ is the unit vector field in the direction $\mathbb{R}$ and $A(e_i,e_i)=\frac{1}{2}H$ where $e_i$ in the tangent space of $\Sigma$. The normal Ricci is constant, i.e., $\Ric^N(\nu,\nu)=-\frac{n^2-5n+8}{4}H^2.$
         \item[2,]  $M$ has no interior ends.
     \end{itemize}
 \end{corollary}
The stability in the above corollary means the nonnegativity of the quadratic form associated to the Jacobi operator with respect to any compactly supported test function. It usually refers to strong stability in the literature, for example, see \cite{Barbosa-Berard-twisted-eigenvalue}. This corollary in the minimal case can be compared to \cite[Theorem 5.3]{wuyujie}, where the author estimates the number of nonparabolic ends under stronger curvature conditions.

 In Section \ref{section2}, we prepare some calculations about the weighted length functional $L_u^\alpha$. In Section \ref{section3}, we show how to construct an $L_u^\alpha$-minimizing ray passing through any given point. In Section \ref{section4}, we prove a splitting theorem for manifolds with boundary and pointwise nonnegative Ricci curvature. In the last Section \ref{section5}, we prove our main result and its corollary.

 \subsection{Acknowledgements}
We thank the anonymous referee for his comments, which improve this paper. The first author is supported by NSFC No. 12401058 and the Talent
Fund of Beijing Jiaotong University No. 2024XKRC008. The second author is supported by the China Postdoctoral Science Foundation (No. 2024M751604).

\section{A weighted length functional}\label{section2}
Assume that $(M,g)$ is a noncompact manifold with boundary $\partial M$ and the unit outward normal $\eta$. Let $u$ be a positive $C^{2,\beta}$ function on $M$ for some $\beta\in(0,1)$. Fix a positive constant $\alpha$ which will be determined in later sections. For any curve $\gamma(s)$ parametrized by arclength $s$, consider the following functional
\begin{equation}\label{weighted-length-functional}
    L^\alpha_u(\gamma)=\int_\gamma u^\alpha ds.
\end{equation}

\begin{definition}
    A curve \(\gamma: I\subset \mathbb{R} \rightarrow M\) is said to be a \textit{\(L_u^\alpha\)-minimizing curve in the free boundary sense} if it satisfies the following two conditions for any \(s_1, s_2 \in I\):
    \begin{enumerate}
        \item For any curve \(\tilde{\gamma}\) connecting \(\gamma(s_1)\) and \(\gamma(s_2)\), we have
        \[
        L_u^\alpha(\tilde{\gamma}) \geq L_u^\alpha(\gamma([s_1, s_2])).
        \]
        \item For any two curves \(\tilde{\gamma}_1\) and \(\tilde{\gamma}_2\), each connecting \(\gamma(s_i)\) (\(i = 1, 2\)) to \(\partial M\), we have
        \[
        L_u^\alpha(\tilde{\gamma}_1) + L_u^\alpha(\tilde{\gamma}_2) \geq L_u^\alpha(\gamma([s_1, s_2])).
        \]
    \end{enumerate}
	In particular, we say $\gamma$ is \textit{$L_u^\alpha$-minimizing} if $\gamma$ satisfies only the first condition.
\end{definition}
\begin{remark}
    If $\gamma: I \rightarrow M$, parametrized by arc length, is an $L_u^\alpha$-minimizing curve in the free boundary sense and is a complete curve, then $I$ can only be $[0, +\infty)$ or $(-\infty, +\infty)$ after a reparametrization. 
    \begin{itemize}
        \item If $I = (-\infty, +\infty)$, we call $\gamma$ a \textit{$L_u^\alpha$-minimizing line in the free boundary sense}.
        \item If $I = [0, +\infty)$, we call $\gamma$ a \textit{$L_u^\alpha$-minimizing ray in the free boundary sense}.
    \end{itemize}
    In particular, $\gamma$ cannot be a closed segment or loop.
\end{remark}

\subsection{Weighted minimizing ray}
Let $\gamma(s):[0,b)\rightarrow M$ be a curve on $M$ with $\gamma(0)\in \partial M$ parametrized by arclength $s$. We say that it is proper if $\gamma\cap\partial M=\gamma(0)$. We allow that $b=+\infty$.

We denote by $\{e_1,\cdots, e_n\}$ a system of orthonormal vector fields along $\gamma$ such that $e_n=\frac{\partial}{\partial\gamma_s}$ and $\partial_se_i$ is tangential. This is the so-called relatively parallel frame (or Bishop frame, \cite{bishop1975Frame}). For simplicity, denote $e_n$ by $\partial s.$ Notice that $\partial s|_{\gamma(0)}=-\eta$ if $\gamma$ is perpendicular to $\partial M$ at $\gamma(0)$. The geodesic curvature vector of $\gamma$ is defined to be $\vec{\kappa}=-\nabla_{\partial s}\partial s$ where $\nabla$ is the Levi-Civita connection of $M.$

\begin{lemma}\label{first-variation}
    If $\gamma_t$ is a smooth 1-parameter family of proper curves with $\gamma_0=\gamma$, $\gamma_t(0)\in \partial M$ and $\gamma_t([a,b))=\gamma([a,b))$ for $a<b$. Then
    \begin{equation}\label{first-derivative of L}
        \left.\frac{d L^\alpha_u(\gamma_t)}{dt}\right|_{t=0}=\int_\gamma \langle\nabla u^\alpha+u^\alpha\vec{\kappa},(\left.\frac{\partial \gamma_t}{\partial t}\right|_{t=0})^{\perp}\rangle ds+\langle\frac{\partial\gamma_t}{\partial t},-\partial s\rangle|_{\gamma(0)}u^\alpha(\gamma(0)).
    \end{equation}
    In particular, the critical curve $\gamma$ of $L^\alpha_u$ satisfies
    \begin{align}\label{critical-equation of L}
        \kappa_i:=\langle \vec{\kappa},e_i\rangle=-\alpha\nabla_{e_i}\log u
    \end{align}
    along $\gamma$ for each $i=1,\cdots,n-1$ and it is perpendicular to $\partial M$ at $\gamma(0).$

    \end{lemma}

    By standard calculation, we obtain

    \begin{lemma}\label{second-variation of L in vector field}
     Let $\gamma_t$ be a smooth 1-parameter family of proper curves with $\gamma_0=\gamma$, $\gamma_t(0)\in\partial M$ and $\gamma_t([a,b))=\gamma([a,b))$ for $a<b$. Denote $V:=\frac{\partial \gamma_t}{\partial t}$. Then the second variation of $L_u^\alpha$ is
    \begin{align*}
        \frac{d^2 L^\alpha_u(\gamma_t)}{dt^2}&=\int_\gamma |\nabla^\perp_{\partial s}V|^2u^\alpha-R(\partial s,V,\partial s, V)u^\alpha+\operatorname{div}_\gamma(\nabla_V V)u^\alpha\\
        &+\int_\gamma 2\operatorname{div}_\gamma(V)\nabla_V u^\alpha+\nabla_V\langle\nabla u^\alpha, V\rangle
        \end{align*}
        where $R(\partial s,V,\partial s,V)$ is the sectional curvature of the plane spanned by  $\{\partial s,V\}$.
    \end{lemma}
    
    Using Lemma \ref{first-variation} and integration by parts, we obtain
\begin{lemma}\label{second-variation of L in one direction}
 Under assumptions of Lemma \ref{second-variation of L in vector field}. Suppose that $\gamma$ is an $L_u^\alpha$-minimizing curve in the free boundary sense and $V|_{t=0}=\phi e_i$. Then the corresponding quadratic form $Q_u(\phi):=\left.\frac{d^2 L^\alpha_u(\gamma_t)}{dt^2}\right|_{t=0}\geq 0$ where
    \begin{align*}
        Q_u(\phi)=&\int_{\gamma} \phi_s^2u^\alpha-R(\partial s,e_i,\partial s,e_i)u^\alpha \phi^2-(\alpha^2+\alpha)\phi^2u^\alpha(\nabla_{e_i}\log u)^2ds\\
        &+\int_\gamma \alpha\phi^2u^{\alpha-1}(\nabla^2u)(e_i,e_i)ds-h_{\partial M}(e_i,e_i)u^{\alpha}\phi^2\big|_{\gamma(0)}.
    \end{align*}
    Here, $h_{\partial M}(\cdot,\cdot)=\langle\nabla_\cdot\eta,\cdot\rangle$ is the second fundamental form of $\partial M$ with respect to the outer unit normal $\eta.$
\end{lemma}
\begin{proof}
The result follows from Lemma \ref{second-variation of L in vector field}. In fact, note that
\[|\nabla^\perp_{\partial s}V|=\phi_s;\]
\[\operatorname{div}_\gamma(V)=\langle \nabla_{\partial s}V,\partial s\rangle=\langle\vec{\kappa},V\rangle=-\alpha\phi\nabla_{e_i}\log u;\]
\[\nabla_V\langle \nabla u^\alpha,V\rangle=\alpha u^{\alpha-1}(\nabla^2 u)(V,V)+\alpha(\alpha-1)u^{\alpha-2}(\nabla_V u)^2+\langle \nabla u^\alpha,\nabla_V V\rangle.\]
On the other hand, integration by parts and the free boundary property yield
\begin{align*}
    \int_\gamma \operatorname{div}_\gamma(\nabla_VV)u^\alpha&=\int_\gamma \operatorname{div}_\gamma(\nabla^\perp_VV+\nabla^T_VV)u^\alpha\\
    &=\int_\gamma u^\alpha \langle\nabla_VV,\vec{\kappa}\rangle+\operatorname{div}_\gamma(u^\alpha\nabla^T_VV)-\langle\nabla^T u^\alpha,\nabla_VV\rangle\\
    &=\int_\gamma \langle -\nabla u^\alpha,\nabla_VV\rangle+\operatorname{div}_\gamma(u^\alpha\nabla^T_VV)\\
    &=u^{\alpha}\langle \nabla_VV,-\partial s\rangle|_{\gamma(0)}+\int_\gamma \langle -\nabla u^\alpha,\nabla_VV\rangle\\
&=-u^\alpha\phi^2h_{\partial M}(e_i,e_i)|_{\gamma(0)}+\int_\gamma \langle -\nabla u^\alpha,\nabla_VV\rangle.
\end{align*}
Plugging the above equations into Lemma \ref{second-variation of L in vector field} completes the proof.

\end{proof}

Summing over all $i$ we have
\begin{lemma}\label{summation of second variation}
    Let $\gamma$ be an $L_u^\alpha$-minimizing curve in the free boundary sense. Then for any compactly supported function $\phi$ on $\gamma$, we obtain
    \begin{align*}
        (n-1)\int_\gamma \phi_s^2u^\alpha ds &\geq \int_\gamma \left(\operatorname{Ric}(\partial s,\partial s)-\alpha\frac{\Delta u}{u}\right)u^\alpha\phi^2 ds-2\alpha\int_\gamma \phi u^{\alpha-1}u_s\phi_s ds\\&+\alpha\int_\gamma\phi^2u^\alpha|\nabla^\perp\log u|^2 ds-\alpha(\alpha-1)\int_\gamma u^{\alpha-2}\phi^2u_s^2\\
        &-\alpha\phi^2u^{\alpha-1}u_s\big|_{\gamma(0)}+H_{\partial M}u^{\alpha}\phi^2\big|_{\gamma(0)}.
    \end{align*}
\end{lemma}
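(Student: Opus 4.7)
The plan is to apply the one-direction inequality $Q_u(\phi)\geq 0$ from Lemma \ref{second-variation of L in one direction} to each of the $n-1$ orthonormal vectors $e_1,\dots,e_{n-1}$ transverse to $\gamma$ and sum. Since $\gamma$ is a critical curve for $L^\alpha_u$, it meets $\partial M$ orthogonally at $\gamma(0)$, so $\partial s|_{\gamma(0)}=-\eta$ and the vectors $e_1,\dots,e_{n-1}$ restrict to an orthonormal basis of $T_{\gamma(0)}\partial M$. The boundary terms therefore sum to $H_{\partial M}u^\alpha\phi^2\big|_{\gamma(0)}$, and the usual trace identities give $\sum_i R(\partial s,e_i,\partial s,e_i)=\operatorname{Ric}(\partial s,\partial s)$ and $\sum_i(\nabla_{e_i}\log u)^2=|\nabla^\perp\log u|^2$.

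The main subtlety lies in the Hessian contribution. One has
\[
\sum_{i=1}^{n-1}(\nabla^2 u)(e_i,e_i)=\Delta u-(\nabla^2 u)(\partial s,\partial s),
\]
and evaluating the second term requires the critical equation \eqref{critical-equation of L}, which says that the geodesic curvature of $\gamma$ is $\nabla_{\partial s}\partial s=\alpha\nabla^\perp\log u$ (as a vector perpendicular to $\partial s$). Consequently
\[
(\nabla^2 u)(\partial s,\partial s)=u_{ss}-(\nabla_{\partial s}\partial s)u=u_{ss}-\alpha u|\nabla^\perp\log u|^2.
\]
Substituting this identity produces an $\alpha^2 u^\alpha|\nabla^\perp\log u|^2$ term that combines with the original $-(\alpha^2+\alpha)u^\alpha|\nabla^\perp\log u|^2$ from the summed inequality, leaving precisely $+\alpha u^\alpha|\nabla^\perp\log u|^2$ on the right-hand side. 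This coefficient collapse is the one cancellation that has to be spotted.

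After this step only a term $\alpha\int_\gamma\phi^2 u^{\alpha-1}u_{ss}\,ds$ remains. I will handle it by integration by parts against the compactly supported $\phi$ on $[0,b)$: the boundary contribution at $s=0$ delivers $-\alpha\phi^2 u^{\alpha-1}u_s\big|_{\gamma(0)}$, differentiating $\phi^2$ gives the cross term $-2\alpha\int_\gamma\phi\phi_s u^{\alpha-1}u_s\,ds$, and differentiating $u^{\alpha-1}$ gives $-\alpha(\alpha-1)\int_\gamma\phi^2 u^{\alpha-2}u_s^2\,ds$. Collecting all of these with the earlier Ricci, Laplacian, $|\nabla^\perp\log u|^2$, and boundary mean curvature terms yields exactly the stated inequality.

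I do not expect any real obstacle: this is a bookkeeping exercise on top of Lemma \ref{second-variation of L in one direction}, with the one delicate point being the use of the critical equation to rewrite $(\nabla^2 u)(\partial s,\partial s)$, which is what reconciles the coefficient $\alpha^2+\alpha$ produced by a naive sum with the coefficient $\alpha$ in the final inequality.
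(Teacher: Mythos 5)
Your argument is correct and is essentially the paper's own proof: sum Lemma \ref{second-variation of L in one direction} over $i=1,\dots,n-1$, use the critical equation \eqref{critical-equation of L} to rewrite $\sum_i(\nabla^2u)(e_i,e_i)=\Delta u-u_{ss}+\alpha u|\nabla^\perp\log u|^2$ (which gives the collapse of the coefficient $\alpha^2+\alpha$ to $\alpha$), and integrate the remaining $\alpha\int\phi^2u^{\alpha-1}u_{ss}\,ds$ by parts to produce the cross term, the $\alpha(\alpha-1)$ term, and the boundary contribution $-\alpha\phi^2u^{\alpha-1}u_s\big|_{\gamma(0)}$, with $\sum_i h_{\partial M}(e_i,e_i)=H_{\partial M}$ at $\gamma(0)$ since $\partial s|_{\gamma(0)}=-\eta$. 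No gaps; this matches the paper's proof step for step.
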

\begin{proof}
    Directly add the result in Lemma \ref{second-variation of L in one direction} over $i$, then we have
    \begin{align*}
        (n-1)&\int_\gamma \phi_s^2u^\alpha ds \geq \int_\gamma \operatorname{Ric}(\partial s,\partial s)u^\alpha\phi^2 ds+\sum_{i=1}^{n-1}h_{\partial M}(e_i,e_i)u^{\alpha}\phi^2\big|_{\gamma(0)}\\
        &+\int_\gamma (\alpha^2+\alpha)\phi^2u^{\alpha}\sum_{i=1}^{n-1}(\nabla_{e_i}\log u)^2-\alpha\phi^2 u^{\alpha-1}\sum_{i=1}^{n-1}(\nabla^2 u)(e_i,e_i) ds.
    \end{align*}
    Note that
    \begin{align*}
        \Delta u&=\sum_{i=1}^{n-1}(\nabla^2 u)(e_i,e_i)+(\nabla^2 u)(\partial s,\partial s)\\
        &=\sum_{i=1}^{n-1}(\nabla^2 u)(e_i,e_i)+u_{ss}+\sum_{i=1}^{n-1}\kappa_i\nabla_{e_i}u\\
        &=\sum_{i=1}^{n-1}(\nabla^2 u)(e_i,e_i)+u_{ss}-\alpha u\sum_{i=1}^{n-1}(\nabla_{e_i}\log u)^2.
    \end{align*}
    Simplify the inequality and do an integration by parts to $u_{ss}$. This leads to the inequality in the lemma.
\end{proof}
As a corollary, we have
\begin{proposition}\label{keyinequality}
     Let $\gamma$ be an $L_u^\alpha$-minimizing curve in the free boundary sense. Then for any compactly supported function $\psi$ on $\gamma$, we obtain
     \begin{align}\label{the key inequality}
 c_1(n,\alpha,\varepsilon)\int_\gamma \psi^2_s ds&\geq  c_2(n,\alpha,\varepsilon)\int_\gamma u^{-2}\psi^2u_s^2 ds\nonumber\\
         &+\int_\gamma \left(\operatorname{Ric}(\partial s,\partial s)-\alpha\frac{\Delta u}{u}\right)\psi^2 ds+\alpha\int_\gamma\psi^2|\nabla^\perp\log u|^2 ds\\
         &+\psi^2\left.\left(H_{\partial M}-\alpha\frac{u_s}{u})\right)\right|_{\gamma(0)}\nonumber
     \end{align}
     where 
     \[c_1(n,\alpha,\varepsilon)= n-1+\frac{\alpha(n-3)}{4\varepsilon}>0\]
     and
     \[c_2(n,\alpha,\varepsilon)=\alpha(1-\frac{n-1}{4}\alpha-(n-3)\varepsilon)\]
     which is positive if $\alpha<\frac{4}{n-1}$ and $\varepsilon$ is small, or $n=3$ and $\alpha< \frac{4}{n-1}$.
\end{proposition}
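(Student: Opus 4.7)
The plan is to derive \eqref{the key inequality} from the summed second-variation inequality of Lemma \ref{summation of second variation} by the substitution $\phi = u^{-\alpha/2}\psi$, followed by absorbing the resulting cross term via Young's inequality.

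First, I would observe that the substitution $\phi = u^{-\alpha/2}\psi$ is natural because it trivializes the weight, $u^\alpha \phi^2 = \psi^2$, so every zeroth-order bulk integrand, together with the two boundary terms $-\alpha\phi^2 u^{\alpha-1}u_s|_{\gamma(0)}$ and $H_{\partial M}u^\alpha\phi^2|_{\gamma(0)}$, converts in one step to precisely the shape on the right-hand side of \eqref{the key inequality}, producing the composite boundary contribution $(H_{\partial M} - \alpha u_s/u)\psi^2|_{\gamma(0)}$ without further manipulation. The derivative-quadratic terms $(n-1)\int \phi_s^2 u^\alpha\, ds$ and $-2\alpha\int \phi u^{\alpha-1}u_s \phi_s\, ds$ are then expanded using $\phi_s = u^{-\alpha/2}\psi_s - \tfrac{\alpha}{2}u^{-\alpha/2-1}u_s\psi$; after collecting like terms with $-\alpha(\alpha-1)\int u^{\alpha-2}\phi^2 u_s^2\, ds$, the inequality rearranges to
\[
(n-1)\int_\gamma \psi_s^2\, ds - (n-3)\alpha\int_\gamma u^{-1}u_s \psi \psi_s\, ds \geq \alpha\Bigl(1 - \frac{(n-1)\alpha}{4}\Bigr)\int_\gamma u^{-2}u_s^2\psi^2\, ds + R(\psi),
\]
where $R(\psi)$ abbreviates the $\operatorname{Ric}$, $\alpha\Delta u/u$, $\alpha|\nabla^\perp\log u|^2$, and boundary contributions appearing on the right-hand side of \eqref{the key inequality}.

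The only real obstacle is the indefinite cross term $-(n-3)\alpha\int u^{-1}u_s\psi\psi_s\, ds$. I would estimate it by Young's inequality, $|u^{-1}u_s\psi\psi_s| \leq \varepsilon\, u^{-2}u_s^2\psi^2 + \tfrac{1}{4\varepsilon}\psi_s^2$, so that (for $n\geq 3$, in which range $(n-3)\alpha \geq 0$)
\[
-(n-3)\alpha\int_\gamma u^{-1}u_s \psi\psi_s\, ds \geq -(n-3)\alpha\,\varepsilon\int_\gamma u^{-2}u_s^2\psi^2\, ds - \frac{(n-3)\alpha}{4\varepsilon}\int_\gamma \psi_s^2\, ds.
\]
Moving the $\psi_s^2$-error to the left and the $u^{-2}u_s^2\psi^2$-error to the right produces precisely $c_1(n,\alpha,\varepsilon)\int_\gamma \psi_s^2\, ds \geq c_2(n,\alpha,\varepsilon)\int_\gamma u^{-2}u_s^2\psi^2\, ds + R(\psi)$ with the constants as stated.

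For the positivity claim, $\alpha < \tfrac{4}{n-1}$ gives $1 - \tfrac{(n-1)\alpha}{4} > 0$, so choosing $\varepsilon$ sufficiently small keeps $c_2 > 0$ while $c_1 > 0$ follows automatically; in the borderline case $n=3$ the cross term is identically zero, and one may take any $\varepsilon$ and allow $\alpha \leq 2 = \tfrac{4}{n-1}$. Apart from the conceptual choice of substitution and recognizing that the indefinite sign of the cross term forces a one-parameter Young estimate, the proof is a routine algebraic manipulation, and I do not anticipate hidden subtleties.
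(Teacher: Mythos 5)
Your proposal is correct and follows essentially the same route as the paper: plug $\phi = u^{-\alpha/2}\psi$ into Lemma \ref{summation of second variation}, collect terms, and absorb the cross term $\alpha(n-3)\int_\gamma u^{-1}u_s\psi\psi_s\,ds$ with the $\varepsilon$-weighted Cauchy--Schwarz (Young) inequality, yielding exactly $c_1$ and $c_2$ as stated.
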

\begin{proof}
    Plug  $\phi=u^{-\alpha/2}\psi$ into Lemma \ref{summation of second variation} gives
    \begin{align*}
         (n-1)\int_\gamma \psi^2_s ds&\geq  \alpha(1-\frac{n-1}{4}\alpha)\int_\gamma u^{-2}\psi^2u_s^2 ds+\alpha(n-3)\int_\gamma u^{-1}\psi u_s\psi_s ds\\
         &+\int_\gamma \left(\operatorname{Ric}(\partial s,\partial s)-\alpha\frac{\Delta u}{u}\right)\psi^2 ds+\alpha\int_\gamma\psi^2|\nabla^\perp\log u|^2 ds\\
&+\psi^2\left.\left(H_{\partial M}-
         \alpha\frac{u_s}{u})\right)\right|_{\gamma(0)}.
     \end{align*}
     Using the Cauchy-Schwarz inequality, we have
     \[\int_\gamma u^{-1}\psi u_s\psi_s ds\leq \varepsilon\int_\gamma u^{-2}\psi^2u_s^2ds+\frac{1}{4\varepsilon}\int_\gamma \psi_s^2 ds.\]
     Plugging it into the above inequality completes the proof.
\end{proof}
\vskip.2cm

\subsection{Weighted minimizing line} In this part, we adapt some results from the previous subsection to curves entirely contained in $M \setminus \partial M$. Let $\gamma(s): (-b, b) \rightarrow M$ be a curve on $M$ parametrized by arc length $s$. We allow $b = +\infty$.

Using calculations similar to those above, we obtain the following two results.

\begin{lemma}
    If $\gamma_t$ is a smooth 1-parameter family of proper curves with $\gamma_0=\gamma$, $\gamma_t\cap \partial M=\emptyset$ and $\gamma_t((-b,-a])\cup \gamma_t([a,b)])=\gamma((-b,-a])\cup \gamma([a,b)])$ for $0<a<b$.  Then
    \begin{equation}\label{first-derivative of L-line}
        \left.\frac{d L^\alpha_u(\gamma_t)}{dt}\right|_{t=0}=\int_\gamma \langle\nabla u^\alpha+u^\alpha\vec{\kappa},(\left.\frac{\partial \gamma_t}{\partial t}\right|_{t=0})^{\perp}\rangle ds.
    \end{equation}
    In particular, the critical curve $\gamma$ of $L^\alpha_u$ satisfies
    \begin{align}\label{critical-equation of L-line}
        \kappa_i:=\langle\vec{\kappa},e_i\rangle=-\alpha\nabla_{e_i}\log u
    \end{align}
    along $\gamma$.
    
\end{lemma}

\begin{proposition}\label{keyinequality-line}
     Let $\gamma$ be an $L_u^\alpha$-minimizing curve in the free boundary sense. Then for any compact supported function $\psi$ on $\gamma$, we obtain
     \begin{align}\label{the key inequality-line}
 c_1(n,\alpha,\varepsilon)\int_\gamma \psi^2_s ds&\geq  c_2(n,\alpha,\varepsilon)\int_\gamma u^{-2}\psi^2u_s^2 ds\nonumber\\
         &+\int_\gamma \left(\operatorname{Ric}(\partial s,\partial s)-\alpha\frac{\Delta u}{u}\right)\psi^2 ds+\alpha\int_\gamma\psi^2|\nabla^\perp\log u|^2 ds
     \end{align}
     where $c_1(n,\alpha,\varepsilon)$ and $c_2(n,\alpha,\varepsilon)$ are the same as the constants in Proposition \ref{keyinequality}.
\end{proposition}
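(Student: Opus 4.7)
\textbf{Proof proposal for Proposition \ref{keyinequality-line}.} The plan is to mirror the derivation leading to Proposition~\ref{keyinequality}, but for a curve $\gamma$ whose image lies entirely in $M\setminus\partial M$, so that every boundary contribution drops out. Concretely, I would first compute the second variation of $L_u^\alpha$ along $\gamma$ in the direction $\phi\, e_i$ for each $i=1,\dots,n-1$ exactly as in Lemma~\ref{second-variation of L in one direction}; the only difference is that, because the variations $\gamma_t$ are now required to satisfy $\gamma_t\cap\partial M=\emptyset$ and to be fixed outside a compact subinterval, no endpoint boundary term appears. Writing $\phi_s\phi$ does not get traded for an $h_{\partial M}$ contribution, so the inequality $Q_u(\phi)\ge 0$ reads exactly as in Lemma~\ref{second-variation of L in one direction} with the $h_{\partial M}(e_i,e_i)u^\alpha\phi^2|_{\gamma(0)}$ summand simply removed.

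Next I would sum over $i=1,\dots,n-1$ as in Lemma~\ref{summation of second variation}, using the decomposition
\[
\Delta u=\sum_{i=1}^{n-1}(\nabla^2 u)(e_i,e_i)+u_{ss}-\alpha u\sum_{i=1}^{n-1}(\nabla_{e_i}\log u)^2,
\]
which holds along $\gamma$ by the critical point equation \eqref{critical-equation of L-line}. Integration by parts on the $u_{ss}$ term again produces no endpoint contribution because $\phi$ (and hence the test function $\psi$ introduced below) is compactly supported in the open interval $(-b,b)$. This yields the analogue of Lemma~\ref{summation of second variation} without the boundary terms $-\alpha\phi^2u^{\alpha-1}u_s|_{\gamma(0)}+H_{\partial M}u^\alpha\phi^2|_{\gamma(0)}$.

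Finally, substituting $\phi=u^{-\alpha/2}\psi$ for a compactly supported $\psi$ on $\gamma$ and applying the Cauchy--Schwarz inequality
\[
\int_\gamma u^{-1}\psi u_s\psi_s\,ds\le \varepsilon\int_\gamma u^{-2}\psi^2 u_s^2\,ds+\frac{1}{4\varepsilon}\int_\gamma \psi_s^2\,ds
\]
to the cross term $\alpha(n-3)\int_\gamma u^{-1}\psi u_s\psi_s\,ds$ produces exactly the inequality \eqref{the key inequality-line}, with the same constants $c_1(n,\alpha,\varepsilon)$ and $c_2(n,\alpha,\varepsilon)$ as in Proposition~\ref{keyinequality}. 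Since the proof is essentially a copy of Proposition~\ref{keyinequality} with the boundary terms suppressed, there is no serious obstacle; the only item requiring care is to verify that the class of admissible variations (proper, with $\gamma_t\cap\partial M=\emptyset$ and fixed near the endpoints of the support) is rich enough to reproduce the one-dimensional second variation argument, which is immediate since $\gamma$ is separated from $\partial M$ on the support of $\psi$ by compactness.
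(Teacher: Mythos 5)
Your proposal is correct and follows essentially the same route as the paper, which simply asserts that the computations of Lemmas \ref{second-variation of L in one direction} and \ref{summation of second variation} and Proposition \ref{keyinequality} carry over verbatim, with every boundary term dropping out because $\gamma$ lies in $M\setminus\partial M$ and the test function is compactly supported in the open interval. Nothing further is needed.
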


\subsection{Rigidity of $L_u^\alpha$-minimizing curve in the free boundary sense}
When the function $u$ and the curvatures of $M$ satisfy certain conditions, we can derive a rigidity result. Specifically, we prove the following.

\begin{proposition}\label{rigidity proposition}
   Let $(M,g)$ be a Riemannian manifold with boundary. Suppose that $\alpha<\frac{4}{n-1}$ and $\gamma$ is a complete $L_u^\alpha$-minimizing curve in the free boundary sense in $M$. If 
    \[\operatorname{Ric}-\alpha u^{-1}\Delta u\geq 0\]
    and 
    \[ (H_{\partial M}-\alpha u^{-1}u_s)|_{\partial M}\geq 0.\]
    Then $|\nabla u|=0$  and $\operatorname{Ric}-\alpha u^{-1}\Delta u=0$ along $\gamma$. 
\end{proposition}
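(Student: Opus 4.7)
After reparametrization, a complete $L_u^\alpha$-minimizing curve in the free boundary sense is either a line $\gamma\colon\mathbb{R}\to M\setminus\partial M$ or a ray $\gamma\colon[0,\infty)\to M$ meeting $\partial M$ perpendicularly at $\gamma(0)$. The plan is to feed an exhausting sequence of cutoff functions into Proposition~\ref{keyinequality-line} (line case) or Proposition~\ref{keyinequality} (ray case), drive the ``derivative energy'' $c_1\int_\gamma \psi_s^2\,ds$ on the left-hand side to zero, and conclude that each non-negative integrand on the right-hand side vanishes identically along $\gamma$.

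Choose tent cutoffs $\psi_R$ equal to $1$ on $[-R,R]$ (resp.\ $[0,R]$) and decreasing linearly to $0$ on $[-2R,-R]\cup[R,2R]$ (resp.\ $[R,2R]$), so that $\int_\gamma \psi_{R,s}^2\,ds\to 0$ while $\psi_R^2\nearrow 1$ pointwise on $\gamma$. Under the curvature hypothesis $\operatorname{Ric}-\alpha u^{-1}\Delta u\geq 0$, each of the three bulk integrands
\[
c_2\, u^{-2}u_s^2\psi_R^2,\qquad \bigl(\operatorname{Ric}-\alpha u^{-1}\Delta u\bigr)\psi_R^2,\qquad \alpha|\nabla^\perp\log u|^2\psi_R^2
\]
is pointwise non-negative. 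Provided the boundary contribution at $\gamma(0)$ (present only in the ray case) is also non-negative, the key inequality then forces $\int_\gamma f\,\psi_R^2\,ds\to 0$ for each such summand $f\geq 0$, and monotone convergence gives $f\equiv 0$ along $\gamma$. This is precisely $u_s\equiv 0$, $|\nabla^\perp u|\equiv 0$ (hence $|\nabla u|\equiv 0$) and $\operatorname{Ric}-\alpha u^{-1}\Delta u\equiv 0$ along $\gamma$, as desired.

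In the line case there is no boundary term and the argument concludes immediately. In the ray case, hypothesis~(ii) makes $(H_{\partial M}-\alpha u_s/u)\psi_R^2|_{\gamma(0)}\geq 0$ directly. The delicate case is~(i), and is handled by invoking the alternative representation of the boundary term recorded in Remark~\ref{remark},
\[
\bigl[(n-1)\psi\psi_s-\tfrac{n+1}{2}\alpha u^{-1}u_s\psi^2\bigr]_{\gamma(0)}:
\]
since $\psi_R$ is constantly $1$ in a neighborhood of $0$ we have $\psi_{R,s}(0)=0$, killing the first summand, while $u_s|_{\gamma(0)}\leq 0$ renders the surviving term $-\tfrac{n+1}{2}\alpha u^{-1}u_s\psi_R^2\big|_{\gamma(0)}$ non-negative. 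This is the main obstacle: the raw boundary term $(H_{\partial M}-\alpha u_s/u)|_{\gamma(0)}$ has no a priori sign under hypothesis~(i) because mean-convexity of $\partial M$ is not part of Proposition~\ref{rigidity proposition}, and the reformulation from Remark~\ref{remark} combined with the locally constant choice of $\psi_R$ is precisely what restores the non-negativity and completes the proof.
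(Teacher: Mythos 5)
Your argument is correct and is essentially the paper's own proof: the same tent cutoffs plugged into Propositions~\ref{keyinequality} and~\ref{keyinequality-line}, with the boundary term at $\gamma(0)$ discarded either directly under the hypothesis $(H_{\partial M}-\alpha u^{-1}u_s)|_{\partial M}\geq 0$ or via the reformulation in Remark~\ref{remark} when $u_s|_{\partial M}\leq 0$, followed by letting $R\to\infty$ so that each nonnegative integrand must vanish. Your case analysis of the boundary term is merely a more explicit spelling-out of the paper's citation of Remark~\ref{remark}; no new route is taken and no step is missing.
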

\begin{proof}
    The proof directly follows from Proposition \ref{keyinequality} and Proposition \ref{keyinequality-line}. In fact, if $\gamma$ is a complete ray, we let 
    \[\psi=\begin{cases}
        1, &\ s\in[0,R]\\
        \frac{2R-s}{R}, &\ s\in[R,2R]\\
        0, & \ \text{otherwise}.
    \end{cases}\]
    Plug it into \eqref{the key inequality}, we obtain
    \[\frac{c_1}{R}\geq c_2\int_0^Ru^{-2}u_s^2 ds+\int_0^R \operatorname{Ric}(\partial s,\partial s)-\alpha\frac{\Delta u}{u}ds+\alpha\int_0^R|\nabla^\perp\log u|^2 ds.\]
    By letting $R\rightarrow \infty,$ we have that
    $u_s=0$, $|\nabla^\perp u|=0$ and $\operatorname{Ric}-\alpha u^{-1}\Delta u=0$ along $\gamma.$ For the other case when $\gamma$ is a complete line, we have the same conclusion by plugging the following $\psi$ into \eqref{the key inequality-line}
      \[\psi=\begin{cases}
        1, &\ s\in[-R,R]\\
        \frac{2R-s}{R}, &\ |s|\in[R,2R]\\
        0, & \ \text{otherwise}.
    \end{cases}\]
    The proof is complete.
\end{proof}

\section{Construction of Weighted Minimizing Rays and Lines }\label{section3}

Suppose $M$ has nonnegative $\alpha$-Ricci curvature in the spectral sense. Then there exists a positive function $u \in C^{2,\beta}(M)$ satisfying \eqref{equation 1 of u} and \eqref{equation 2 of u}.
Note that throughout this section, we also assume the mean convexity of the boundary $\partial M$.

The main theorem of this section is
\begin{theorem}\label{theorem of passing}
	Suppose $M$ has nonnegative $\alpha$-Ricci curvature in the spectral sense and mean convex boundary. If $M$ contains a complete $L_u^\alpha$-minimizing curve $\gamma$ in the free boundary sense, then for any $p\in M$, there exists a complete $L_u^\alpha$-minimizing curve in the free boundary sense, denoted by $\gamma_p$, that passes through $p$.
	\label{thm_existence_of_line}
\end{theorem}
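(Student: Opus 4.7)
The plan is a Cheeger--Gromoll-style curve-capture argument adapted to the free-boundary $L_u^\alpha$-minimization. Working with the conformally rescaled metric $\tilde g = u^{2\alpha}g$, for which $L_u^\alpha(\sigma)$ is precisely the $\tilde g$-length of $\sigma$, one may speak of $L_u^\alpha$-minimizers as minimizing $\tilde g$-geodesics; the hypothesis that $\gamma$ is a complete $L_u^\alpha$-minimizing curve furnishes the Hopf--Rinow-type completeness of $(M,\tilde g)$ needed to run the construction. I will produce $\gamma_p$ as a subsequential limit of $L_u^\alpha$-minimizers constrained to pass through $p$, with their other endpoints escaping to infinity along $\gamma$.

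If $\gamma:\bR\to M$ is a line, I would set $q_i^\pm=\gamma(\pm i)$ and take $\sigma_i$ to be an $L_u^\alpha$-minimizer from $q_i^-$ to $q_i^+$ subject to passing through $p$; such a $\sigma_i$ is a broken $\tilde g$-geodesic with a possible corner at $p$. Parametrizing by $\tilde g$-arclength so that $\sigma_i(0)=p$, the unit tangent vectors at $0$ subconverge, and Arzel\`a--Ascoli yields a $C^0_{\mathrm{loc}}$-limit $\gamma_p:\bR\to M$. The crux is that the corner at $p$ vanishes in the limit: the triangle inequality gives $d(p,q_i^-)+d(p,q_i^+)-d(q_i^-,q_i^+)=O(1)$ while both $d(p,q_i^\pm)\to\infty$, so the classical rounding estimate --- smoothing a corner of angle $\pi-\theta$ within radius $r$ shortens the curve by $\asymp r\theta^2$ --- combined with the freedom to take $r\sim i$ forces $\theta\to 0$. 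Therefore $\gamma_p$ is a smooth $\tilde g$-geodesic line; condition (i) of the definition follows from $\gamma_p$ being a limit of minimizers, and condition (ii) is inherited from the corresponding property of $\gamma$ via the triangle inequality.

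In the ray case $\gamma:[0,\infty)\to M$ with $\gamma(0)\in\partial M$, for each $R>0$ I would take $\sigma_R$ to be an $L_u^\alpha$-minimizer among curves from $\partial M$ (free endpoint) to $\gamma(R)$ passing through $p$; it decomposes as $\alpha\ast\beta_R$ where $\alpha$ is the (fixed) free-boundary minimizer from $p$ of length $d_{\mathrm{fb}}(p,\partial M)$ and $\beta_R$ is a $\tilde g$-geodesic from $p$ to $\gamma(R)$. Free-boundary minimality of $\gamma$ forces $L(\sigma_R)\le R+O(1)$, and Arzel\`a--Ascoli again yields a subsequential limit $\gamma_p$ emerging from $\partial M$ through $p$. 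The main obstacle is corner-vanishing at $p$ in this case: only the $\beta_R$-side has length tending to infinity, while the $\alpha$-side has fixed length $d_{\mathrm{fb}}(p,\partial M)$, so the rounding estimate alone only yields $\theta=O(1)$. I would get around this by doubling $M$ across $\partial M$: the Neumann condition $\partial u/\partial\eta=0$ makes $u$ extend by reflection to a $C^{1,\alpha}$-regular $u'$ on $M' = M\cup_{\partial M}M$, so that $\gamma$ doubles to an $L_{u'}^\alpha$-minimizing line $\gamma'$ in $M'$; the line-case argument applied in $M'$ produces a line through $p$, which---by imposing reflection symmetry on the competitors in the construction---restricts to the desired ray $\gamma_p$ in $M$.
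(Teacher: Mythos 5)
Your argument never uses the hypothesis that $u$ solves $-\alpha\Delta u+\Ric\cdot u=0$ with Neumann condition; after passing to $\tilde g=u^{2\alpha}g$ you are in effect trying to prove the purely metric statement that a complete space containing a minimizing line (or free-boundary ray) contains one through \emph{every} point. That statement is false without curvature input: take $\bR^2$ with a conformal factor $\geq 1$ equal to $1$ outside the unit disk and shaped so that the disk carries a long thin ``finger''; straight lines far from the disk remain minimizing lines, but any complete geodesic through the tip must run down the finger on both sides and is beaten by a competitor going around the neck, so no complete minimizing curve passes through the tip. So some step of your scheme must fail, and it is the corner-vanishing: the rounding estimate $\asymp r\theta^2$ is valid only for $r$ small compared with the geometry of a fixed neighborhood of $p$ --- you cannot take $r\sim i$ without a Toponogov-type comparison, which is unavailable for $\tilde g$ --- and with $r$ fixed the excess $d(p,q_i^-)+d(p,q_i^+)-d(q_i^-,q_i^+)=O(1)$ gives only $\theta=O(1)$, not $\theta\to 0$. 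Even granting $\theta\to 0$, the limit is a concatenation of two asymptotic rays, and its \emph{global} minimality (condition (i) of the definition) is precisely the point that in Cheeger--Gromoll requires the Busemann-function argument under $\Ric\geq 0$; ``limit of minimizers'' does not apply, since each $\sigma_i$ is only a minimizer constrained to pass through $p$. There are further unsupported steps: completeness of $(M,\tilde g)$ is not automatic (if $u$ decays, weighted minimizers between far points need not exist --- the paper forces existence by modifying the weight to be $\geq 1$ outside large compact sets), and for a merely mean-convex boundary the doubled manifold is only Lipschitz across $\partial M$, while condition (ii) (which bounds sums of lengths to $\partial M$ by the length of the \emph{segment} $\gamma([s_1,s_2])$) does not imply that the doubled ray is $L_{u'}^\alpha$-minimizing in the double.

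The paper's proof is of a genuinely different nature and shows where the spectral hypothesis must enter. For $p$ close to the given curve $\gamma$, one perturbs the weight (Lemma~\ref{lem_existenceU}) to $u_{r,t}<u$ in $B_{3r}(p)$ with $-\alpha\Delta u_{r,t}+\Ric\cdot u_{r,t}>0$ in the annulus $B_{3r}(p)\setminus B_r(p)$ and $\partial u_{r,t}/\partial\eta\geq 0$; since the perturbation strictly decreases $L^\alpha_{u_{r,t}}(\gamma)$ by a fixed amount, a comparison argument forces minimizers for suitable far-field modifications $u_{r,t,h}$ to enter $B_{3r-\eps}(p)$, and the limit as $h\to\infty$ is a complete $L^\alpha_{u_{r,t}}$-minimizing curve meeting $\overline{B_{3r-\eps}(p)}$; the second-variation rigidity (Proposition~\ref{rigidity proposition}) then rules out the curve staying in the annulus, so it must enter $B_r(p)$, and letting $t\to 0$ captures $\overline{B_r(p)}$ with the original weight (Theorem~\ref{prop_existence}). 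Finally a nearest-point argument on the closed set $\Omega$ of captured points upgrades this local capture to all of $M$. If you want to salvage your approach, you would need to inject the equation for $u$ (or some replacement for the Busemann/Laplacian-comparison machinery) at the corner-closing and minimality steps; as written, those steps are exactly what the counterexample above excludes.
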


The proof of the above theorem is inspired by Liu \cite{liugang} and Chodosh-Eichmair-Moraru \cite{chodoshsplittingtheorem}. A similar result has been proved by Chu-Lee-Zhu \cite{zhu1}. Notice that in our case, the convergence of a sequence of $L_u^\alpha$ minimizing curves in the free boundary sense may produce a complete $L_u^\alpha$ minimizing line that is captured.

We first prove the following lemma.

\begin{lemma}
    \label{lem_upper_bound}
    Suppose $\gamma$ is an $L_u^\alpha$-minimizing curve in the free boundary sense in a compact set $K \subset M$. Then, the total length of $\gamma$ is uniformly bounded, and the weighted length of $\gamma$ is uniformly bounded by a constant $C$, where $C$ depends on the diameter of $K$, $\alpha$, and the infimum and supremum of $u$ on $K$.
\end{lemma}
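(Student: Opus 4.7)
The plan is to apply condition (1) of the $L_u^\alpha$-minimizing definition against a short competitor curve joining two points of $\gamma$, and then to convert the resulting weighted-length bound into an arclength bound using the positive lower bound of $u$ on $K$.

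Since $u$ is smooth and positive on $M$ and $K$ is compact, I first fix $u_{\min}:=\inf_K u>0$, $u_{\max}:=\sup_K u<\infty$, and $D:=\diam(K)$. For any two parameters $s_1<s_2$ in the domain of $\gamma$, the points $\gamma(s_1),\gamma(s_2)\in K$ can be joined by a competitor curve $\tilde\gamma$ of length at most $D$ along which $u$ is bounded by a constant $U$ comparable to $u_{\max}$ (one takes $\tilde\gamma$ inside $K$ whenever possible; in the intended applications $K$ will be a closed geodesic ball and one may let $\tilde\gamma$ pass through the center, so $U=u_{\max}$). Condition (1) then yields
\[
L_u^\alpha(\gamma|_{[s_1,s_2]})\leq L_u^\alpha(\tilde\gamma)\leq U^\alpha D.
\]
On the other hand, $u\geq u_{\min}$ along $\gamma\subset K$ and $\gamma$ is arclength-parametrized, so the left-hand side is at least $u_{\min}^\alpha(s_2-s_1)$. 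Combining these inequalities gives
\[
s_2-s_1\leq (U/u_{\min})^\alpha D,
\]
and taking the supremum over $s_1,s_2$ in the domain of $\gamma$ bounds both the total arclength of $\gamma$ and its weighted length by constants depending only on $D$, $\alpha$, $u_{\min}$, and $u_{\max}$, as required.

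The only point requiring genuine attention is the production of the competitor $\tilde\gamma$, which is really a mild connectedness/intrinsic-diameter question about $K$ and is automatic in the intended applications. If $\gamma$ has an endpoint on $\partial M$, condition (1) with $s_1$ equal to the boundary parameter still applies, so condition (2) of the minimizing definition plays no role here.
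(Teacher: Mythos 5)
Your argument is correct and is essentially the paper's own proof: compare a subarc of $\gamma$ with a short competitor curve joining its endpoints (length $\leq\diam(K)+\varepsilon$), bound $u^\alpha$ above along the competitor and below along $\gamma\subset K$, and conclude both the arclength and weighted-length bounds. The caveat you flag about the competitor possibly leaving $K$ is present (and glossed over) in the paper's proof as well, so no further comment is needed.
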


\begin{proof}
    We establish the following inequality:
    \[
    \inf_{K} u^\alpha \int_{\gamma} d\gamma \leq \int_{\gamma} u^\alpha d\gamma \leq \int_{\tilde{\gamma}} u^\alpha d\tilde{\gamma} \leq \sup_{\tilde{K}} u^\alpha \int_{\tilde{\gamma}} d\tilde{\gamma} \leq \sup_{\tilde{K}} u^\alpha (\mathrm{diam}(K) + \varepsilon),
    \]
    where $\tilde{\gamma}$ is a curve connecting the endpoints of $\gamma$, $\tilde{K}=\{x\in M:d(x,K)\le \mathrm{diam}(M)\}$, and the total length of $\tilde{\gamma}$ is bounded by $\mathrm{diam}(K) + \varepsilon$.
    Here, we have used the fact that $\tilde{\gamma}\subset \tilde{K}$. Taking $\varepsilon \rightarrow 0^+$, we obtain the desired result.
\end{proof}

We will need the following lemma to construct a perturbed weighted function $u_{r,t}$ near an existing $L_u^\alpha$-minimizing curve in the free boundary sense.

\begin{lemma}
	\label{lem_existenceU}
	For any $p_0 \in M$, there exist positive constants $r_0=r_0(n,M, p_0)$ and $t_0=t_0(n,M,p_0)$ such that for any $r\le r_0$ and any $p$ in the closure of $B_{2r}(p_0)$, there exists a positive function $u_{r,t}$ on $M$ for any $0<t\le t_0$ satisfying the following conditions:
\begin{enumerate}
	\item $u_{r,t}\rightarrow u$ as $r,t\rightarrow 0^+$ in the $C^2$ sense, and $u_{r,t}\rightarrow u$ smoothly for any fixed $r>0$ as $t\to 0^+$.
	\item $u_{r,t}=u$ on $M\backslash B_{3r}(p)$, and $u_{r,t}<u$ in $B_{3r}(p)$.
	\item $-\alpha \Delta u_{r,t}+\operatorname{Ric}\cdot u_{r,t}> 0$ in $B_{3r}(p)\backslash B_r(p)$.
	\item $\frac{\partial u_{r,t}}{\partial \eta}\ge 0$ on $\partial M$.
\end{enumerate}
\end{lemma}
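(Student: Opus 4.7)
The plan is to take $u_{r,t} := u - t\,\varphi_r$, where $\varphi_r$ is a scaled bump function concentrated in $\overline{B_{3r}(p)}$ with uniformly bounded $C^2$-norm. Using $-\alpha\Delta u+\operatorname{Ric}\cdot u = 0$, one has
\[
-\alpha\Delta u_{r,t} + \operatorname{Ric}\cdot u_{r,t} \;=\; t\bigl(\alpha\Delta\varphi_r - \operatorname{Ric}\cdot\varphi_r\bigr),
\]
so the supersolution condition (3) reduces to making $\alpha\Delta\varphi_r$ dominate $\operatorname{Ric}\cdot\varphi_r$ on the annulus $B_{3r}(p)\setminus B_r(p)$. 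Items (1) and (2) are then immediate: $\|u_{r,t}-u\|_{C^2} = t\|\varphi_r\|_{C^2} = O(t)$, and the sign of $\varphi_r$ is built into the construction.

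For the profile I will fix a smooth $\Psi\colon[0,\infty)\to[0,\infty)$ with $\Psi>0$ on $[0,9)$, $\Psi\equiv 0$ on $[9,\infty)$, satisfying the radial-Laplacian inequality $2t\Psi''(t)+n\Psi'(t) > 0$ on $[1,9)$. An explicit model is a smooth modification of $(9-t)_+^{k}$ with integer $k\geq 4n+2$, for which a direct computation gives
\[
2t\Psi''(t) + n\Psi'(t) \;=\; k(9-t)^{k-2}\bigl[(9n+2)t - 9n\bigr],
\]
positive on $[1,9)$. Setting $\varphi_r(x):=r^{2}\Psi\bigl(d_M(x,p)^{2}/r^{2}\bigr)$ yields a $C^{2}$ function on $M$ (since $d_M^{2}$ is smooth on the injectivity ball of $p$), and the $r^{2}$-scaling delivers $\|\varphi_r\|_{\infty}, \|\nabla\varphi_r\|_{\infty}, \|\nabla^{2}\varphi_r\|_{\infty} = O(r^{2}), O(r), O(1)$ uniformly in $r$.

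For (3), using $|\nabla d_M|=1$ and $\Delta d_M = (n-1)/d_M + O(d_M)$ on the fixed compact set $\overline{B_{5r_0}(p_0)}$ (chosen inside the injectivity radius of $p_0$), the chain rule gives
\[
\Delta\varphi_r = 4t\Psi''(t) + 2n\Psi'(t) + O\bigl(r^{2}(9-t)^{k-1}\bigr), \qquad t=d_M(x,p)^{2}/r^{2},
\]
whose main term is bounded below by a positive multiple of $(9-t)^{k-2}$ on the annulus. Since $|\operatorname{Ric}\cdot\varphi_r|\leq Cr^{2}(9-t)^{k}$, factoring out $(9-t)^{k-2}$ and using $(9-t)^{2}\leq 64$ absorbs the error and Ricci terms into the main one after shrinking $r_{0}$, delivering the strict inequality required for (3).

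Condition (4) is trivial whenever $r_{0}$ is chosen small and $p_{0}\in M\setminus\partial M$ so that $B_{3r_{0}}(p_{0})\subset M\setminus\partial M$. When $p_{0}$ lies within $3r_{0}$ of $\partial M$, I will work in Fermi coordinates in a collar neighborhood and replace $\varphi_r$ by the symmetrization $\varphi_r^{(1)} + \varphi_r^{(2)}$, where $\varphi_r^{(2)}$ is the bump centered at the reflected image $p^{*}$ of $p$ across $\partial M$ (equivalently, $\varphi_r$ symmetrized under the canonical involution $\tau$ on the double $DM = M\cup_{\partial M}M$). Being even across $\partial M$ in the normal direction, this has vanishing Neumann derivative on $\partial M$, giving (4). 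The nontrivial step here, and the main obstacle I expect, is checking that symmetrization preserves (3); this is resolved by the observation that $d_M(x,p)\leq d_{DM}(x,\tau p)$ on $M$, which forces $\varphi_r^{(2)}\leq\varphi_r^{(1)}$ pointwise and keeps the index $t_{2}$ of the reflected bump in $[t_{1},9)$, so its Laplacian only contributes non-negatively to the annular inequality, and the estimates from (3) survive after a final shrinking of $r_{0}$.
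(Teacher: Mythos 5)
Your interior construction is fine and is essentially the paper's argument in additive rather than multiplicative form (the paper takes $u_{r,t}=(1+tv_r)u$ with $v_r=r^4f(d_p/r)$, $f\le 0$ increasing; your $u-t\varphi_r$ with a decreasing nonnegative profile plays the same role, and the annulus computation $2(k-1)t-n(9-t)>0$ for $k$ large is the same mechanism as the paper's condition $sf''+lf'<0$). The genuine gap is in the boundary case, i.e.\ precisely the case the lemma is needed for, since $p_0\in\partial M$ and $p$ may lie on or arbitrarily close to $\partial M$. First, already your $\varphi_r^{(1)}$ is built from the intrinsic distance $d_M(\cdot,p)$, which for a manifold with boundary need not be smooth (not even $C^1$) on $B_{3r}(p)$ however small $r$ is: minimizing paths may touch a non-convex boundary. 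So $u_{r,t}$ is not known to be $C^2$, and conditions (1) and (3) (which involve $\Delta u_{r,t}$ classically) cannot be verified. Second, your reflection step conflates two different objects: the bump centered at the Fermi-reflected point $p^{*}$ and the $\tau$-symmetrization on the double $DM$. These agree only when $\partial M$ is totally geodesic. The doubled metric is merely Lipschitz across $\partial M$, so $d_{DM}(\cdot,\tau p)$ is not $C^2$ near $\partial M$ and the statement ``even across $\partial M$ hence zero Neumann derivative'' is only valid where the symmetrized function is differentiable, which is exactly what is in doubt; if instead you define $\varphi_r^{(2)}=\varphi_r^{(1)}\circ\sigma$ with $\sigma$ the coordinate reflection, then $\sigma$ is not an isometry, so neither the support containment in $B_{3r}(p)$ (needed for (2)) nor the sign of $\alpha\Delta\varphi_r^{(2)}-\operatorname{Ric}\varphi_r^{(2)}$ on the annulus (needed for (3)) follows from the estimates you proved for $\varphi_r^{(1)}$. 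In particular the key assertion that the reflected bump ``only contributes non-negatively to the annular inequality'' requires a lower bound on $\Delta\varphi_r^{(2)}$, i.e.\ an upper Laplacian comparison for the distance to $\tau p$ in the Lipschitz double, which you do not supply; the inequality $d_M(x,p)\le d_{DM}(x,\tau p)$ only locates the radial index, it does not control the Hessian.

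The paper sidesteps all of this with a simpler device that you may want to adopt: work in Fermi coordinates near $p_0$ and replace the Riemannian distance by the \emph{coordinate (Euclidean) distance} $d_p(\cdot)=\mathrm{dist}_\delta(p,\cdot)$, which is smooth, satisfies $\tfrac12\le|\nabla d_p|_g\le 2$ and $d_p\Delta_g d_p\le n$ for $r_0$ small, and, crucially, has $\partial_\eta d_p^2=2(0-p_1)\ge 0$ on $\partial M=\{x_1=0\}$ because $p$ lies in $\{x_1\le 0\}$. Combining this with a profile that is monotone increasing towards $0$ (the paper's $f$, your $-\Psi$), the Neumann condition (4) comes for free from $\partial_\eta u_{r,t}= tur^3f'(d_p/r)\,\partial_\eta d_p\ge 0$ (using $\partial_\eta u=0$), with no reflection and no regularity issues. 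Your annulus estimate then goes through verbatim with $d_p$ in place of $d_M$.
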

\begin{proof}
	This is a purely local construction.
Such perturbations have been widely used in many works (e.g., \cite{ehrlich1976metric,liugang, Carlotto-Chodosh-Eichmair-PMT}), and we will essentially follow the approach presented in \cite[Appendix J]{Carlotto-Chodosh-Eichmair-PMT}.
	First, we consider the case $p_0 \in \partial M$.
	We choose the standard Fermi coordinate near $p_0$ such that $p_0=0$ and $M$ is locally given by $\{x_1\le 0\}$ near $0$, and the metric $g_{ij}$ at $0$ is $\delta_{ij}$.
	In particular, we can write $\eta=\frac{\partial  }{\partial x_1}$.

	Given any $p \in B_{2r_0}(p_0)$ for $r_0$ small enough, we choose the function $d_p$ by
	\[
		d_p(\cdot)=\mathrm{dist}_{\delta}(p,\cdot),
	\]
	where $\mathrm{dist}_\delta$ denotes the distance function with respect to the metric $\delta$ (which is the Euclidean metric using the Fermi coordinate).
	A straightforward calculation shows that
	\begin{equation}
		\frac{\partial d_p^2}{\partial \eta}= 0.
		\label{eq:pfGradDp}
	\end{equation}
    Note that
	\begin{align*}
		\Delta_g d_p^2 ={}&
		\sum_{i,j=1 }^{n}g^{ij}\frac{\partial^2d_p^2}{\partial x_i\partial x_j} - \sum_{i,k=1 }^{n}\Gamma_{ii}^k \frac{\partial d_p^2}{\partial x_k}\\
		\le{}&  2n + \sum_{i,j=1 }^{n}2n\sup_{B_{5r_0}(p_0)}|g^{ij}-\delta| + 2d_p\sup_{B_{5r_0}(p_0)}|\Gamma_{ii}^j|,
	\end{align*}
where $g^{ij}$ denotes the inverse of $g_{ij}$, and $\Gamma_{ij}^k$ denotes the Christoffel symbols of $g$.
	When we choose $r_0$ small enough (not depending on $p$), both $d_p$ and $|g^{ij}-\delta|$ are small enough in $B_{5r_0}(p_0)$ and hence, we can obtain
	\begin{equation}
		\frac{1}{2}\le |\nabla^g d_p|_g\le 2,\quad \Delta_gd_p^2\le 2n+\frac{1}{2} \text{ in }B_{5r_0}(p_0).
	\end{equation}
	Note that this implies
	\[
		d_p\Delta_g d_p\le n.
	\]

Next, we choose a function $f(s)$ defined on $\mathbb{R}$ such that
\begin{enumerate}
	\item $f(s)$ is a negative constant on $(-\infty,\frac{1}{2}]$,
	\item $f(s)=-\exp(k/(s-3))$ on $[1,3)$,
	\item $f(s)=0$ when $s\ge 3$.
\end{enumerate}

Then, for any $l>0$ and $s \in [1,3)$, we have
\[
	sf''(s)+l f'(s)=-\exp(\frac{k}{s-3})\frac{k}{(s-3)^4}\left(ks+2s(s-3)-l(s-3)^2\right).
\]
If we choose $k>4+4l$, then
\begin{equation}
	sf''(s)+lf'(s)<0\quad \text{ for }s\in [1,3).
	\label{eq:pfProperyf}
\end{equation}
Note that $f'(s)>0$ and $ f''(s)<0$ for $s\in [1,3)$.

We consider
\[
	v_r=r^4 f(\frac{d_p}{r}).
\]
Then, we know
\[
	v_r<0 \text{ in }B_{3r}(p),
\]
and define
\[
	u_{r,t}=(1+tv_r)u.
\]
In particular, for $t_0$ small enough, we know $u_{r,t}>0$ for any $0<r\le r_0$ and $t\in (0,t_0]$.
In $B_{3r}(p)\backslash B_r(p)$, we compute
\begin{align*}
	\alpha\Delta u_{r,t}={}&\alpha(1+tv_r)\Delta u+\alpha tu\Delta v+2\alpha t\nabla u\cdot \nabla v\\
	={}& \mathrm{Ric} \cdot u_{r,t}+\alpha t (u \Delta v+2\nabla u\cdot \nabla v)\\
	\le{}& \mathrm{Ric} \cdot u_{r,t}+\alpha t \left( u r^2 f''(\frac{d_p}{r})|\nabla d_p|_g^2+ur^3f'(\frac{d_p}{r})\Delta d_p\right. \\
    &+\left.2|\nabla u|_g r^3f'(\frac{d_p}{r})|\nabla d_p|_g\right)\\
	\le{}& \mathrm{Ric} \cdot u_{r,t}+\frac{\alpha t u r^3}{d_p} \left( \frac{d_p}{4r} f''(\frac{d_p}{r})+nf'(\frac{d_p}{r})\right.\\
    &\left.+ \frac{12\||\nabla u|_g\|_{L^\infty(B_{5r_0}(p_0))}r_0}{\inf _{B_{5r_0}(p_0)}u}f'(\frac{d_p}{r})\right).
\end{align*}
Now, we select $l=4n+\frac{48\||\nabla u|_g\|_{L^\infty(B_{5r_0}(p_0))}r_0}{\inf _{B_{5r_0}(p_0)}u}$ and $k=4l+4+1$. It follows from \eqref{eq:pfProperyf} that
\[
	\alpha \Delta u_{r,t}< \mathrm{Ric} \cdot u_{r,t}
\]
in $B_{3r}(p)\backslash B_r(p)$.
Now, we compute
\[
	\frac{\partial u_{r,t}}{\partial \eta}=(1+tv_r)\frac{\partial u}{\partial \eta}+tu \frac{\partial v_r}{\partial \eta}=tu r^3 f'(\frac{d_p}{r}) \frac{\partial d_p}{\partial \eta}\ge 0
\]
by \eqref{eq:pfGradDp}.
It is easy to verify that other conditions are also satisfied by the construction of $u_{r,t}$.

If $p_0 \in M\backslash \partial M$, we can choose $r_0$ sufficiently small such that $5r_0<\mathrm{dist}_g(p_0,\partial M)$, and then we do not need to consider the boundary condition.
The construction of $u_{r,t}$ is the same as above, except that we use the distance function in normal coordinates rather than in Fermi coordinates.
\end{proof}

\begin{theorem}
	\label{prop_existence}
	Let $\gamma:I\rightarrow M$ be a complete $L_u^\alpha$-minimizing curve in the free boundary sense in $M$.
	Then, for any $s \in I$, there exists a positive constant $r_0=r_0(s,\gamma)<\frac{1}{3}$ such that for any $p \in M\setminus \gamma$ with $\mathrm{dist}_g(p,\gamma)=\mathrm{dist}_g(p,\gamma(s))=2r$ and $0<r\le r_0$, we can construct another complete $L_u^\alpha$-minimizing curve in the free boundary sense, denoted by $\gamma_r$, which passes through the closure of $B_r(p)$.
\end{theorem}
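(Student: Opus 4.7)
\emph{Proof plan.}
The plan is to use the perturbed weights $u_{r,t}$ from Lemma \ref{lem_existenceU} to bias an auxiliary minimization problem so that its minimizers are forced through $\overline{B_r(p)}$, and then to extract limits as $R\to\infty$ and $t\to 0^+$ to recover a minimizer for the original functional $L_u^\alpha$.

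Localize first. For large $R$, put $q_R^-=\gamma(s-R)$ and $q_R^+=\gamma(s+R)$ when $\gamma$ is a line, or $q_R^-=\gamma(0)\in\partial M$ and $q_R^+=\gamma(R)$ when $\gamma$ is a ray. Minimize $L_{u_{r,t}}^\alpha$ over proper curves joining $q_R^-$ to $q_R^+$ (with the free boundary condition at $q_R^-$ in the ray case). Lemma \ref{lem_upper_bound} provides a uniform length bound on the competitors, so an Arzel\`a--Ascoli argument produces a minimizer $\sigma_{t,R}$.

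The key step is to show $\sigma_{t,R}\cap \overline{B_r(p)}\neq\emptyset$ for all large $R$ and suitably small $t$. Suppose not. First, if $\sigma_{t,R}$ also misses $B_{3r}(p)$, then $u_{r,t}\equiv u$ along $\sigma_{t,R}$, giving
\[
L_{u_{r,t}}^\alpha(\sigma_{t,R})=L_u^\alpha(\sigma_{t,R})\geq L_u^\alpha(\gamma([s-R,s+R]))
\]
by the $L_u^\alpha$-minimality of $\gamma$; but $\mathrm{dist}_g(p,\gamma)=2r<3r$ forces $\gamma$ into $B_{3r}(p)$ where $u_{r,t}<u$, so $L_{u_{r,t}}^\alpha(\gamma([s-R,s+R]))<L_u^\alpha(\gamma([s-R,s+R]))$, contradicting the minimality of $\sigma_{t,R}$. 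Otherwise $\sigma_{t,R}$ enters $B_{3r}(p)$ but skirts $\overline{B_r(p)}$, so a subarc lies in the annulus $A=B_{3r}(p)\setminus\overline{B_r(p)}$. Comparing the total $L_{u_{r,t}}^\alpha$-savings of $\sigma_{t,R}$ against the savings available to $\gamma$ forces this subarc to have arc length at least of order $r$. On the other hand, apply the second-variation inequality (Proposition \ref{keyinequality-line}, or Proposition \ref{keyinequality} with boundary terms) to $\sigma_{t,R}$ with $u$ replaced by $u_{r,t}$: property (3) of Lemma \ref{lem_existenceU} makes the bulk integrand $\operatorname{Ric}-\alpha\Delta u_{r,t}/u_{r,t}$ strictly positive on $A$, and a cutoff test function supported on the annulus subarc bounds its length from above by a quantity which, when $r$ and $t$ are suitably calibrated, is incompatible with the lower bound.

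Finally, let $R\to\infty$. Using Lemma \ref{lem_upper_bound} on each compact subdomain together with a diagonal Arzel\`a--Ascoli argument, extract a subsequential limit $\sigma_t$ which is a complete $L_{u_{r,t}}^\alpha$-minimizing curve in the free boundary sense passing through $\overline{B_r(p)}$. Sending $t\to 0^+$ and invoking the $C^2$ convergence $u_{r,t}\to u$ from Lemma \ref{lem_existenceU}(1), a further extraction produces the desired complete $L_u^\alpha$-minimizing curve $\gamma_r$ through $\overline{B_r(p)}$. The principal obstacle is the annulus case: one must coordinate the smallness of $r$ and $t$ so that the perturbation from $u$ to $u_{r,t}$ is strong enough to drag $\sigma_{t,R}$ into $B_{3r}(p)$, while the strict spectral positivity of Lemma \ref{lem_existenceU}(3), fed into the key inequality, prevents $\sigma_{t,R}$ from lingering in the annulus and so forces it into $\overline{B_r(p)}$.
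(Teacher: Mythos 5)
Your overall architecture (perturb the weight via Lemma \ref{lem_existenceU}, force the minimizer through $\overline{B_r(p)}$, then extract limits) matches the paper, but the mechanism you propose for the decisive step does not work. You want to rule out the case where $\sigma_{t,R}$ enters $B_{3r}(p)$ but skirts $\overline{B_r(p)}$ by feeding the strict positivity of $-\alpha\Delta u_{r,t}+\operatorname{Ric}u_{r,t}$ on the annulus into the second-variation inequality with a cutoff \emph{supported on the annulus subarc}, "calibrating" $r$ and $t$. Quantitatively this is hopeless: since $u_{r,t}=(1+t\,r^4f(d_p/r))u$, the positivity gained on $B_{3r}(p)\setminus B_r(p)$ is of order $\alpha t\,r^2$ (and it even degenerates like $\exp(k/(d_p/r-3))$ near $\partial B_{3r}(p)$), while a test function vanishing at the ends of a subarc of length $\ell\lesssim r$ costs $c_1\int\psi_s^2\gtrsim c_1/\ell\sim 1/r$ in Proposition \ref{keyinequality}/\ref{keyinequality-line}. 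The resulting inequality $c_1/r\gtrsim C t r^3$ is satisfied for every admissible $r\le r_0$, $t\le t_0$, so no contradiction can be extracted at this scale; making $r$ or $t$ smaller only weakens the perturbation further. The only way to kill the gradient cost is to let the support of the test function become unbounded, i.e.\ to work with a \emph{complete} $L_{u_{r,t}}^\alpha$-minimizing curve. This is exactly the paper's order of operations: the comparison argument (with an $\varepsilon$-shell $B_{3r}\setminus B_{3r-\varepsilon}$, and with a bound independent of the truncation parameter, so that the intersection with the open ball $B_{3r-\varepsilon}(p)$ survives the limit) only forces the curves into $B_{3r-\varepsilon}(p)$; then, after passing to the complete limit curve, Proposition \ref{rigidity proposition} (whose proof sends the cutoff radius $R\to\infty$) gives $\operatorname{Ric}-\alpha u_{r,t}^{-1}\Delta u_{r,t}\equiv 0$ along it, contradicting Lemma \ref{lem_existenceU}(iii) at its annulus points unless it meets $B_r(p)$. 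Your plan inverts this order and the step fails.

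Two further gaps in the setup. First, existence and confinement of $\sigma_{t,R}$: Lemma \ref{lem_upper_bound} bounds the length of a minimizer already known to lie in a compact set; since $u$ (hence $u_{r,t}$) may decay at infinity and $\partial M$ may be noncompact, minimizing sequences for $L^\alpha_{u_{r,t}}$ with your endpoint conditions need not stay in any compact set, and the infimum need not be attained. The paper's auxiliary weights $u_{r,t,h}\ge\max\{1,u_{r,t}\}$ outside a compact neighborhood of a long piece of $\gamma$ exist precisely to guarantee this, and are removed only in the limit $h\to\infty$; you cannot simply drop them. Second, your competitor classes (curves pinned at $q_R^-$ and $q_R^+$, with $q_R^-=\gamma(0)$ in the ray case) only produce a limit satisfying condition (1) of the definition; to obtain a curve that is $L_u^\alpha$-minimizing \emph{in the free boundary sense}, as the theorem asserts and as the iteration in Theorem \ref{theorem of passing} requires, one must minimize over the paper's classes: curves from $\gamma(h+s+1)$ to $\partial M$ with free endpoint in the ray case, and the class $A_h$ (which includes pairs of arcs from $\gamma(\pm(h+1))$ to $\partial M$) in the line case. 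Note also that the cross-comparison against $\gamma$ in the ray case uses precisely condition (2) of $\gamma$'s free-boundary minimality, so the classes must be chosen so that the relevant piece of $\gamma$ is itself a competitor.
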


\begin{proof}
First, we consider the case that $I=[0,+\infty)$.
We fix any $s \in [0,+\infty)$ and let $p_0=\gamma(s)$.
Let $r_0,t_0$ be the positive constants given by Lemma \ref{lem_existenceU} for $p_0$.
Let $p$ be the point such that $\mathrm{dist}_g(p,p_0)=\mathrm{dist}_g(p,\gamma)=2r$ and $u_{r,t}$ be the function constructed in Lemma \ref{lem_existenceU} for $p$ and $t\in (0,t_0)$.

We construct new smooth weighted functions $u_{r,t,h}$ for any $h> 0$ by requiring
\begin{enumerate}
	\item $u_{r,t,h}=u_{r,t}$ in $\left\{ q \in M:\mathrm{dist}_g(q,\gamma([0,h+s+1]))\le 2h \right\}$,
	\item 
    $u_{r,t,h}\ge u_{r,t}$ in $M\backslash \left\{ q \in M:\mathrm{dist}_g(q,\gamma([0,h+s+1]))\le 2h\right\}$,
    
	\item $u_{r,t,h}\ge \max\{ 1,u_{r,t} \}$ in $ \left\{ q \in M:\mathrm{dist}_g(q,\gamma([0,h+s+1]))\geq 2h +1\right\}$.
\end{enumerate}
Let $\gamma_{r,t,h}$ to be the curve from $\gamma(h+s+1)$ to $\partial M$ which minimizes the functional $L_{u_{r,t,h}}^\alpha$.
Note that since $M$ is complete and $u_{r,t,h}\ge 1$ outside a compact set, the curve $\gamma_{r,t,h}$ is well-defined and has finite length by Lemma \ref{lem_upper_bound}.
It is straightforward to verify that $\gamma_{r,t,h}$ is an $L_{u_{r,t,h}}^\alpha$-minimizing curve in the free boundary sense.

We claim that $\gamma_{r,t,h}$ has a non-empty intersection with $B_{3r-\varepsilon}(p)$
for $\varepsilon>0$ small enough, which does not depend on $h$.
Note that by the construction of $u_{r,t}$, we have
\[
	u_{r,t}^\alpha\ge (1-C\varepsilon)u^\alpha
\]
in $B_{3r}(p)\backslash B_{3r-\varepsilon}(p)$ for some positive constant $C$.

If $\gamma_{r,t,h}\cap B_{3r-\varepsilon}(p)=\emptyset$, then we have
\begin{align}\label{eq:pfcomparison}
	0< {}&\int_0^{s+h+1} u^\alpha d\gamma-\int_0^{s+h+1} u_{r,t}^\alpha d\gamma \\
    ={}&\int_0^{s+h+1} u^\alpha d\gamma-\int_0^{s+h+1} u_{r,t,h}^\alpha d\gamma\nonumber \\
	\le{}& \int_0^{s+h+1} u^\alpha d\gamma-\int_{ \gamma_{r,t,h}} u_{r,t,h}^\alpha d\gamma_{r,t,h}\nonumber \\
	\le{}& \int_0^{s+h+1} u^\alpha d\gamma-\int_{ \gamma_{r,t,h}\cap B_{3r}(p)} (1-C\varepsilon)u^\alpha d\gamma_{r,t,h}-\int_{ \gamma_{r,t,h}\backslash B_{3r}(p)} u^\alpha d\gamma_{r,t,h}\nonumber \\
={}& \int_0^{s+h+1} u^\alpha d\gamma-\int_{\gamma_{r,t,h}} u^\alpha d\gamma_{r,t,h}+C\varepsilon \int_{ \gamma_{r,t,h}\cap B_{3r}(p)} u^\alpha d\gamma_{r,t,h}\nonumber \\
	\le{}& \int_0^{s+h+1} u^\alpha d\gamma-\int_0^{s+h+1} u^\alpha d\gamma+C\varepsilon \int_{ \gamma_{r,t,h}\cap B_{3r}(p)} u^\alpha d\gamma_{r,t,h}\nonumber \\
	={}& C\varepsilon \int_{ \gamma_{r,t,h}\cap B_{3r}(p)} u^\alpha d\gamma_{r,t,h}.	\nonumber
\end{align}
Now, we show that the integral in the last line is independent of \( h \).  
We claim that the total length of \( \gamma_{r,t,h} \cap B_{3r}(p) \) is uniformly bounded by a constant independent of \( h \).

To prove this, denote by \( \{\hat{\gamma}_i\}_{i=1}^N \) the collection of connected components of \( \gamma_{r,t,h} \cap B_{4r}(p) \) that pass through \( B_{3r}(p) \).  
Since \( u_{r,t} = u_{r,t,h} \) inside \( B_{4r}(p) \), each \( \hat{\gamma}_i \) is an \( L_{u_{r,t}}^\alpha \)-minimizing curve in the free boundary sense.

Let \( \Lambda = \sup_{B_{4r}(p)} u_{r,t} \) and \( \lambda = \inf_{B_{4r}(p)} u_{r,t}^\alpha \). There exists a constant \( \varepsilon_0 > 0 \) such that for any \( i \neq j \),  
\[
\mathrm{dist}_g(\hat{\gamma}_i \cap B_{3r}(p), \hat{\gamma}_j \cap B_{3r}(p)) > \varepsilon_0.
\]  
Suppose, for contradiction, that this fails. Without loss of generality, assume  
\[
\mathrm{dist}_g(\hat{\gamma}_1 \cap B_{3r}(p), \hat{\gamma}_2 \cap B_{3r}(p)) \leq \varepsilon_0.
\]  
Let \( \hat{\gamma}_1 = \gamma_{r,t,h}((t_{10}, t_{11})) \), \( \hat{\gamma}_2 = \gamma_{r,t,h}((t_{20}, t_{21})) \) with \( t_{10} < t_{11} \le t_{20} < t_{21} \).
Choose \( \tau_1 \in (t_{10}, t_{11}) \), \( \tau_2 \in (t_{20}, t_{21}) \) such that \( \gamma_{r,t,h}(\tau_i) \in \overline{B_{3r}(p)} \) for \( i = 1, 2 \) and  
\[
\mathrm{dist}_g(\gamma_{r,t,h}(\tau_1), \gamma_{r,t,h}(\tau_2)) \leq \varepsilon_0.
\]  
Construct a new curve \( \tilde{\gamma} \) by replacing \( \gamma_{r,t,h}((\tau_1, \tau_2)) \) with a geodesic \( \hat{\gamma} \) connecting \( \gamma_{r,t,h}(\tau_1) \) and \( \gamma_{r,t,h}(\tau_2) \). Then,  
\[
\int_{\hat{\gamma}} u_{r,t}^\alpha \, d\hat{\gamma} \leq \Lambda\int_{\hat{\gamma}} d\hat{\gamma} \leq \Lambda \varepsilon_0.
\]  
On the other hand, since \( u_{r,t}^\alpha \geq \lambda> 0 \) on \( B_{4r}(p) \), we have  
\[
\int_{\gamma_{r,t,h}((\tau_1, \tau_2))} d\gamma_{r,t,h} \leq \frac{1}{\lambda} \int_{\gamma_{r,t,h}((\tau_1, \tau_2))} u_{r,t}^\alpha \, d\gamma_{r,t,h}.
\]  
Note that the length of \( \gamma_{r,t,h}((\tau_1, \tau_2)) \) is at least $2r$ since it will touch the boundary of \( B_{4r}(p) \) somewhere.
Hence,  
\[
\int_{\hat{\gamma}} u_{r,t}^\alpha \, d\hat{\gamma} \leq \Lambda \varepsilon_0 \leq \frac{\Lambda \varepsilon_0}{2r\lambda} \int_{\gamma_{r,t,h}((\tau_1, \tau_2))} u_{r,t}^\alpha \, d\gamma_{r,t,h}.
\]  
By choosing \( \varepsilon_0 \) sufficiently small such that \( \frac{\Lambda \varepsilon_0}{2r\lambda} < 1 \), we obtain a contradiction to the \( L_{u_{r,t}}^\alpha \)-minimality of \( \gamma_{r,t,h} \), since \( \tilde{\gamma} \) would have strictly smaller weighted length.

Therefore, the number \( N \) of such components is uniformly bounded, independent of \( h \). Moreover, by Lemma \ref{lem_upper_bound}, the length of each \( \hat{\gamma}_i \) is uniformly bounded by a constant independent of \( h \). Hence, the total length of \( \gamma_{r,t,h} \cap B_{3r}(p) \) is uniformly bounded independent of \( h \). It follows that the integral in the last line of \eqref{eq:pfcomparison} is also uniformly bounded by a constant independent of \( h \).
Then, we can choose $\varepsilon$ small enough to obtain a contradiction, since the right-hand side of the first line of \eqref{eq:pfcomparison} is a constant that does not depend on $h$.

For any compact $K\subset M$, the total length of each component of $\gamma_{r,t,h}\cap K$ is uniformly bounded by Lemma \ref{lem_upper_bound}.
The critical equation \eqref{critical-equation of L} also implies the geodesic curvature of $\gamma_{r,t,h}$ is uniformly bounded in $K$.
Then, for fixed $r,t>0$, we can choose a subsequence of $\gamma_{r,t,h}$ that converges locally and smoothly to $\tilde{\gamma}_{r,t}$ as $h \rightarrow +\infty$.
Note that $\tilde{\gamma}_{r,t}$ may have multiple connected components.
Since $u_{r,t,h}\rightarrow u_{r,t}$ in any compact set, each component of $\tilde{\gamma}_{r,t}$ is a complete $L_{u_{r,t}}^\alpha$-minimizing curve in the free boundary sense.

Moreover, $\tilde{\gamma}_{r,t}$ has non-empty intersection with the closure of $B_{3r-\varepsilon}(p)$.
We define $\gamma_{r,t}$ to be the component of $\tilde{\gamma}_{r,t}$ which passes through the closure of $B_{3r-\varepsilon}(p)$.

\vskip.1cm 
 \textbf{Claim:} $\gamma_{r,t}\cap B_{r}(p)\neq \emptyset$.
 \vskip.1cm
  
This follows from Proposition \ref{rigidity proposition}. Lemma \ref{lem_existenceU} ensures that $u_{r,t}$ satisfies the assumption of Proposition \ref{rigidity proposition}.  Thus $\operatorname{Ric}-\alpha u_{r,t}^{-1}\Delta u_{r,t}=0$ along $\gamma$, contradicting (iii) of Lemma \ref{lem_existenceU}.

Now, we may further choose $t\rightarrow 0^+$ and, up to a subsequence, have $\gamma_{r,t}$ converging locally and smoothly to $\tilde{\gamma}_r$. We choose $\gamma_r$ to be the component of $\tilde{\gamma}_r$ that passes through the closure of $B_{r}(p)$.
Then, $\gamma_r$ is a complete $L_u^\alpha$-minimizing curve in the free boundary sense and passes through the closure of $B_{r}(p)$. We remark that it is not known whether $\gamma_r$ is a ray or a line.
See Figure \ref{fig:minimizing-in-free-boundary-sense}.
\begin{figure}[ht]
    \centering
    \begingroup
\def\svgwidth{0.8\columnwidth}
	\import{./figures/}{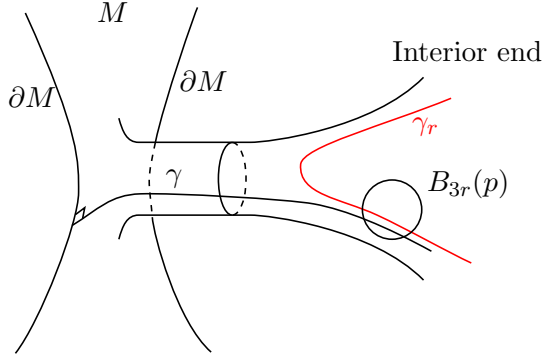}
	\endgroup

    \caption{The case where $\gamma_r$ is a line}
    \label{fig:minimizing-in-free-boundary-sense}
\end{figure}

\vskip.2cm
Now, we consider the case that $I=(-\infty,+\infty)$.
For any $p_0 \in \gamma$, we assume $\gamma(0)=p_0$ after a reparametrization.
The choice of $u_{r,t}$, $u_{r,t,h}$ is the same as above.
The difference lies in the construction of $\gamma_{r,t,h}$.
For a fixed $h>0$, we define the class of curves $A_h$ as follows.
A curve $\beta$ belongs to $ A_h$ if $\beta$ satisfies one of the following conditions:
\begin{enumerate}
    \item $\beta$ is a curve connecting $\gamma(-h-1)$ and $\gamma(h+1)$,
    \item $\beta$ has two components $\beta_1$ and $\beta_2$, where each $\beta_i$ is a curve connecting $\gamma((-1)^i(h+1))$ and $\partial M$.
\end{enumerate}
We define $\gamma_{r,t,h}$ to be the curve in $A_h$ that minimizes $L_{u_{r,t,h}}^\alpha$.
Such a curve exists because $M$ is complete and $u_{r,t,h}\ge 1$ outside a compact set.
The computation in \eqref{eq:pfcomparison} still holds, and we can choose $\varepsilon$ small enough to ensure that $\gamma_{r,t,h}$ has non-empty intersection with $B_{3r-\varepsilon}(p)$.
Then, we can repeat the argument from the case $I=[0,+\infty)$ to obtain a complete $L_u^\alpha$-minimizing curve in the free boundary sense, denoted by $\gamma_r$, which passes through the closure of $B_r(p)$.
\end{proof}

Now, we are ready to prove the main theorem stated at the beginning of this section.

\begin{proof}[Proof of Theorem \ref{theorem of passing}]
	We define the set $\Omega\subset M$ by
	\begin{align*}
        \Omega:={}&\{p\in M:\ \text{there exists a complete} \\
		  &\text{ $L_u^\alpha$-minimizing curve in the free boundary sense through}\ p\}.
	\end{align*}
	It is clear that $\Omega$ is a non-empty closed subset of $M$ by the assumption of the theorem and the compactness of $L_u^\alpha$-minimizing curves in the free boundary sense.

	If $\Omega\neq M$, we pick any $p_1$ in $M\backslash \Omega$.
	Let $p_0 \in \Omega$ be the nearest point to $p_1$, and let $r_0$ be the positive constant given by Proposition \ref{prop_existence} for $p_0$.
        We choose $\gamma$ to be any $L_u^\alpha$-minimizing curve in the free boundary sense through $p_0$.
	Let $\beta$ be a shortest curve connecting $p_0$ and $p_1$, with $\beta(0)=p_0$ parametrized by arclength.
	It is easy to see that $B_{2r}(\beta(2r))\cap \Omega=\emptyset$ for any $r>0$ sufficiently small.
        It implies $$\mathrm{dist}_g(\beta(2r),\gamma)=\mathrm{dist}_g(\beta(2r),p_0)=2r.$$
        Then, we can apply Theorem \ref{prop_existence} for $r<r_0$ to conclude that $\overline{B_r(\beta(2r))}\cap \Omega\neq \emptyset$.
        This is a contradiction.
	Hence, $\Omega=M$.
\end{proof}

\section{Splitting theorem with interior end}\label{section4}

We need the following proposition, which can be proved by a standard argument. Recall that an interior end of a noncompact manifold $M$ with boundary is an unbounded component of $M\setminus B_R$ that has empty intersection with $\partial M.$

\begin{proposition}\label{existence of geodesic ray}
    Let $(M, g)$ be a smooth noncompact Riemannian manifold with boundary. If $M$ has an interior end, then there exists an $L_u^\alpha$-minimizing curve in the free boundary sense in $M$.
\end{proposition}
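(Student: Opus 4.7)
The plan is to construct a complete $L_u^\alpha$-minimizing curve in the free boundary sense as a limit of $L_u^\alpha$-minimizers connecting a sequence of points going to infinity in the interior end to $\partial M$. Let $E$ be the given interior end: by definition, $E$ is an unbounded connected component of $M\setminus B_R(p)$ (for some $p\in\partial M$, $R>0$) whose boundary in $M$ lies in $\partial B_R(p)$ and is disjoint from $\partial M$. Fix a sequence $q_k\in E$ with $d_g(q_k,p)\to\infty$.

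For each $k$, I would produce an $L_u^\alpha$-minimizer $\gamma_k$ among all curves from $q_k$ to $\partial M$, parametrized by $g$-arclength as $\gamma_k\colon[0,L_k]\to M$ with $\gamma_k(0)=p_k\in\partial M$ and $\gamma_k(L_k)=q_k$. Existence requires care, since the weighted metric $u^{2\alpha}g$ may fail to be complete when $u$ decays at infinity; I would handle this by a perturbation analogous to Lemma \ref{lem_existenceU}, replacing $u$ outside a large compact set by a function bounded below by a fixed positive constant, producing minimizers for the perturbed problem, and then letting the perturbation tend to zero. The interior-end structure is essential here: since $E$ is disjoint from $\partial M$, any competitor curve from $q_k$ to $\partial M$ must cross the compact ``bottleneck'' $\partial E\subset\partial B_R(p)$, which confines the minimization to a controlled region and forces the endpoints $p_k$ to lie in a bounded subset of $\partial M$.

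Passing to a subsequence, $p_k\to p_\infty\in\partial M$ and the initial tangents $\gamma_k'(0)$ converge in $T_{p_\infty}M$. Each $\gamma_k$ satisfies the weighted geodesic equation \eqref{critical-equation of L}, a second-order ODE with locally bounded coefficients, together with the orthogonality condition $\gamma_k'(0)\perp T_{p_k}\partial M$. A diagonal Arzel\`a--Ascoli argument on compact subintervals therefore produces a locally $C^1$-convergent subsequence whose limit $\gamma_\infty\colon[0,\infty)\to M$ is a $g$-arclength parametrized curve starting at $p_\infty$; since $L_k\to\infty$ the domain is all of $[0,\infty)$, so $\gamma_\infty$ is a complete ray.

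It remains to verify that $\gamma_\infty$ is $L_u^\alpha$-minimizing in the free boundary sense. Condition~(i) is standard: any strictly shorter competitor joining $\gamma_\infty(s_1)$ to $\gamma_\infty(s_2)$ could, after a small perturbation of its endpoints to $\gamma_k(s_1),\gamma_k(s_2)$, be inserted in $\gamma_k$ to contradict the minimality of $\gamma_k$ from $q_k$ to $\partial M$ for large $k$. For condition~(ii), given curves $\tilde\gamma_i$ joining $\gamma_\infty(s_i)$ to $\partial M$, approximate them by curves $\tilde\gamma_i^{(k)}$ from $\gamma_k(s_i)$ to $\partial M$ (by prepending a short link from $\gamma_k(s_i)$ to $\gamma_\infty(s_i)$); concatenating the reverse of $\gamma_k|_{[s_2,L_k]}$ with $\tilde\gamma_2^{(k)}$ gives a competitor from $q_k$ to $\partial M$, so minimality of $\gamma_k$ yields
\[
L_u^\alpha(\gamma_k|_{[0,s_2]})\le L_u^\alpha(\tilde\gamma_2^{(k)}),
\]
and hence $L_u^\alpha(\gamma_k|_{[s_1,s_2]})\le L_u^\alpha(\tilde\gamma_2^{(k)})\le L_u^\alpha(\tilde\gamma_1^{(k)})+L_u^\alpha(\tilde\gamma_2^{(k)})$; letting $k\to\infty$ completes the verification. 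The main obstacle is the rigorous construction of the minimizers $\gamma_k$ under these weak hypotheses on $u$; the interior-end structure, together with a perturbation of $u$ to be coercive at infinity, is what allows the standard compactness machinery to go through.
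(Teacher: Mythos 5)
Your overall strategy---connect points escaping along the interior end to $\partial M$ by weighted minimizers for a weight made coercive at infinity, use the bottleneck $\partial E\subset\partial B_R(p)$ to confine the curves, and pass to a local limit---is exactly the paper's. The gap is in the step where you claim to obtain honest $L_u^\alpha$-minimizers $\gamma_k$ from $q_k$ to $\partial M$ by ``producing minimizers for the perturbed problem, and then letting the perturbation tend to zero'' at fixed $k$. Such an unperturbed minimizer need not exist: $u$ is only known to be positive and may decay at infinity, so as you push the perturbation region outward (for fixed $q_k$) the perturbed minimizers can make longer and longer detours through regions where $u^\alpha$ is tiny; in the local limit the curve can disconnect, and the component through $q_k$ may be a ray escaping to infinity that never reaches $\partial M$ (and whose free endpoint $q_k$ lies in the interior, so it is not a free-boundary object at all). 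Everything after this point in your write-up---the boundary endpoints $p_k$, their convergence $p_k\to p_\infty$, the orthogonality at $\gamma_k(0)$, and above all the use of $L_u^\alpha$-minimality of $\gamma_k$ with respect to $u$ itself in your verification of conditions (i) and (ii)---presupposes these honest minimizers, so the gap propagates through the rest of the argument.

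The paper avoids this by never removing the perturbation at fixed $k$: it couples the perturbation to the exhaustion, choosing $u_n$ with $u_n=u$ on $B_{R+n}(p)$, $u_n\ge u$ everywhere, and $u_n\ge\max\{1,u\}$ outside $B_{R+n+1}(p)$, minimizes $L_{u_n}^\alpha$ among curves from the compact slice $\partial B_{R+n}(p)\cap E$ to $\partial M$ (existence and length control as in Lemma \ref{lem_upper_bound}, since $u_n\ge 1$ outside a compact set), notes that every such curve must meet $\overline{B_R(p)}$ because the interior end is disjoint from $\partial M$, and then passes to a single local limit in $n$. Since $u_n=u$ on any fixed compact set once $n$ is large, every component of the limit is a complete $L_u^\alpha$-minimizing curve in the free boundary sense, and one component meets $\overline{B_R(p)}$---which is all the proposition requires, so one never needs the limit to be a ray emanating from a converging sequence of boundary points. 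Your comparison arguments for (i) and (ii) go through essentially verbatim in this coupled setting (compare against $u_n$ and use that $u_n=u$ near any fixed competitor for large $n$), so the repair is minor; but as written, the construction of the unperturbed minimizers $\gamma_k$ is the step that fails.
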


\begin{proof}
    This part is similar to the construction in the proof of Theorem \ref{prop_existence}. Fix a point $p \in \partial M$. Since $M$ has an interior end by assumption, we can choose a sufficiently large $R$ such that $M \setminus B_R(p)$ has at least one unbounded component, denoted by $E$, that does not intersect $\partial M$.

    Now, consider a family of exhaustions $B_{R+n}(p)$ of $M$ for $n = 1, 2, \dots$. We also choose a perturbation of the weighted function $u_n$ such that:
    \begin{itemize}
        \item $u_n = u$ in $B_{R+n}(p)$,
        \item $u_n \geq u$ on $M$,
        \item $u_n > \max\{1, u\}$ in $M \setminus B_{R+n+1}(p)$.
    \end{itemize}
    Since $\partial B_{R+n}(p) \cap E$ is compact, there exists a curve $\gamma_n$ connecting $\partial B_{R+n}(p) \cap E$ and $\partial M$ that minimizes the weighted length $L_{u_n}^\alpha$. Note that $\gamma_n \cap B_R(p) \neq \emptyset$ for all $n$.

    Taking $n \to \infty$ and passing to a subsequence, $\gamma_n$ converges locally and smoothly to $\tilde{\gamma}$, which passes through the closure of $B_R(p)$.
	Each component of $\tilde{\gamma}$ is $L_u^\alpha$-minimizing in the free boundary sense.
	We then select one of the connected components of $\tilde{\gamma}$ that passes through the closure of $B_R(p)$. This is the desired curve.
\end{proof}

\begin{remark}
	Note that if we choose $u \equiv 1$, then each $\gamma_n$ is the shortest geodesic connecting $\partial B_{R+n}(p) \cap E$ and $\partial M$, and the limit $\gamma$ is a minimizing geodesic ray starting from $\partial M$ since $M$ is complete.
\end{remark}

\begin{theorem}\label{theorem with pointwise ric>0}
 Let $(M,g)$ be a smooth noncompact Riemannian manifold with mean convex boundary. Assume that the Ricci curvature of $M$ is nonnegative and $M$ has an interior end. Then, $M$ is isometric to $\Sigma\times \mathbb{R}_{\geq 0}$, where $\Sigma$ is a closed manifold with nonnegative Ricci curvature.
\end{theorem}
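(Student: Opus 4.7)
The plan is to deduce the theorem directly from Sakurai's splitting theorem \cite{sakurai2017}, using the interior-end hypothesis to produce the minimizing ray that Sakurai's theorem needs as input.

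First, I would apply Proposition \ref{existence of geodesic ray} with the trivial weight $u\equiv 1$. With this choice the weighted length $L_1^\alpha$ is ordinary arc length, and the proof of that proposition goes through unchanged: the interior-end hypothesis supplies an unbounded component $E$ of $M\setminus B_R(p)$ whose closure does not meet $\partial M$, and the exhausting minimization produces a complete unit-speed geodesic $\gamma\colon [0,\infty)\to M$, minimizing in the free boundary sense, with $\gamma(0)\in\partial M$ and $\gamma$ meeting $\partial M$ orthogonally.

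Next I would extract from $\gamma$ the standard distance-to-boundary realization. Condition (ii) in the definition of $L_u^\alpha$-minimizing in the free boundary sense, applied with $s_1$ shrinking to $0$ so that $\tilde\gamma_1$ may be taken of vanishing length, shows that every curve joining $\gamma(s)$ to $\partial M$ has length at least $s$. Hence $d_g(\gamma(s),\partial M)=s$ for every $s\ge 0$, and $\gamma$ is a distance-realizing ray from $\partial M$ in the classical sense.

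Finally I would invoke Sakurai's splitting theorem \cite{sakurai2017}, which asserts that a complete Riemannian manifold with $\operatorname{Ric}\ge 0$, mean-convex (possibly noncompact) boundary, and a distance-realizing ray from the boundary must split isometrically as $\Sigma\times\mathbb{R}_{\ge 0}$ with $\Sigma$ closed and $\operatorname{Ric}_\Sigma\ge 0$. This is exactly the conclusion of the theorem; compactness of $\partial M=\Sigma\times\{0\}$ emerges as an output rather than a hypothesis. The main obstacle is verifying that the ray produced by Proposition \ref{existence of geodesic ray} is globally distance-realizing to $\partial M$ rather than only locally so, which is exactly the content of the free-boundary-minimizing structure engineered in Section \ref{section2}. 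Note that the full curve-capture result Theorem \ref{theorem of passing} is not needed for this version of the splitting; it re-enters in the proof of the main Theorem \ref{maintheorem}, where pointwise $\operatorname{Ric}\ge 0$ must itself be derived from the spectral hypothesis.
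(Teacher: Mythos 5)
Your proposal is correct and follows essentially the same route as the paper: take $u\equiv 1$ in Proposition \ref{existence of geodesic ray} to obtain a free boundary minimizing geodesic ray, observe (via the free-boundary minimizing property) that it satisfies $\operatorname{dist}(\gamma(s),\partial M)=s$, and then conclude by Sakurai's splitting theorem \cite[Theorem 1.8]{sakurai2017}. Your extra remark making the distance-realizing property explicit is exactly what the paper asserts implicitly, so there is nothing to add.
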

If we replace the existence of an interior end with the assumption that the boundary of $M$ is compact, this result is due to Kasue \cite{kasue} and  Croke-Kleiner \cite{croke-bruce}. The proof essentially follows from Kasue \cite{kasue}. The interesting point is that the existence of an interior end forces the boundary of the manifold to be compact and the number of interior ends to be exactly one.
\begin{proof}
 By choosing $u\equiv 1$,
Proposition \ref{existence of geodesic ray} provides us with a free boundary minimizing geodesic ray $\gamma:[0,\infty)\rightarrow M$ such that $\operatorname{dist}(\gamma(t),\partial M)=t$ for any $t\geq 0$. Then result follows from \cite[Theorem 1.8]{sakurai2017}.
\end{proof}

\vskip.2cm
\section{Proof of the main theorem}\label{section5}

In this section, we provide a proof of the main theorem.

\begin{proof}[Proof of Theorem \ref{maintheorem}]
    First, we claim that if $M$ has an interior end, it must split. By the assumption, let $u$ satisfy 
    $$-\alpha \Delta u+\operatorname{Ric}\cdot u=0 \ \text{on}\ M,$$
 \[\frac{\partial u}{\partial \eta}=0 \ \text{on}\ \partial M.\]
Recall the weighted length function 
 \[L_u^\alpha(\gamma)=\int_\gamma u^\alpha.\]
By Proposition \ref{existence of geodesic ray}, we can find a complete curve $\tau$ that is $L_u^\alpha$-minimizing in the free boundary sense.

 With this curve as a starting point, by Theorem \ref{theorem of passing}, for any point on $M$ there exists a complete $L_u^\alpha$ minimizing curve passing through it. It follows from Proposition \ref{rigidity proposition} that $|\nabla u|=0$ on $M$. Thus, $u$ is a positive constant, so $\operatorname{Ric}\geq 0$ on $M$, i.e., the Ricci curvature is pointwise nonnegative. Hence, the result follows from Theorem \ref{theorem with pointwise ric>0}.
\end{proof}

\begin{proof}[Proof of Corollary \ref{corollary1}]
    The stability inequality is
    \[\int_M|\nabla \varphi|^2-(\operatorname{Ric}^N(\nu,\nu)+|A|^2)\varphi^2-\int_{\partial M}h_{\partial N}(\nu,\nu)\varphi^2\geq 0.\]
    Let $\{e_i\}_{i=1}^n$ be a local orthonormal frame of $M$. We assume that $\operatorname{Ric}=\operatorname{Ric}(e_1,e_1)$. The Gauss equation implies
    \begin{align*}
        \int_M|\nabla \varphi|^2+&\operatorname{Ric}\varphi^2-\int_{\partial M}h_{\partial N}(\nu,\nu)\varphi^2\\
        &\geq \int_M (\operatorname{Ric}^N(\nu,\nu)+\sum_{i=1}^n R^N_{1i1i}+|A|^2-h_{11}H+\sum_{i=1}^nh_{1i}^2)\varphi^2\\
        &\geq \int_M \left(\operatorname{biRic}(e_1,\nu)+\frac{5-n}{4}H^2\right)\varphi^2.
    \end{align*}
   In the above second inequality, we have used the following inequality (see \cite{chengxustructure}):
    \[|A|^2+h_{11} H-\sum_{i=1}^nh_{1i}^2\geq \frac{5-n}{4}H^2.\]
    We remark that the equality holds in above inequality if and only if $h_{11}=-\frac{n-3}{2}H,\ \ h_{ii}=\frac{1}{2}H, \ \text{and}\ h_{ij}=0, \ \forall\ i\neq j.$
    
    It follows from  assumptions of the theorem that for any compactly supported function $\varphi\in C_c(M)$, it holds that
    \[\int_M|\nabla \varphi|^2+\operatorname{Ric}\varphi^2-\int_{\partial M}h_{\partial N}(\nu,\nu)\varphi^2\geq 0.\]
    Then there exists a positive function $u$ on $M$ satisfying
    $$-\Delta u+\operatorname{Ric}\cdot u=0 \ \text{on}\ M,$$
 \[\frac{\partial u}{\partial \eta}=h_{\partial N}(\nu,\nu)u \ \text{on}\ \partial M.\]

 Using this $u$ in Proposition \ref{keyinequality} and choose $\alpha=1$ (this needs $n\leq 4$), the boundary term of the inequality in Proposition \ref{keyinequality} now becomes
 \[\psi^2\left.\left(H_{\partial M}-\alpha\frac{u_s}{u}\right)\right|_{\gamma(0)}=\left.\psi^2(H_{\partial M}+h_{\partial N}(\nu,\nu))\right|_{\gamma(0)}.\]
 Since $M$ is a free boundary hypersurface in $(N,\partial N)$, then the above term is nonnegative if we assume the mean convexity of $\partial N$. Then all the analysis in the proof of Theorem \ref{maintheorem} follows if $n\leq 4$. That is, if we assume the existence of an interior end, then $M$ is isometric to $\Sigma\times \mathbb{R}_{\geq0}$ where $\Sigma$ is a closed manifold.

Denote $e_1:=\partial_t$ the unit vector field in the direction of $\mathbb{R}_{\geq 0}$. Let $r$ be the distance function to the boundary $\partial M$ and let $\phi(r)$ be a Lipschitz function that is $1$ when $r<R$, and is $0$ when $r>2R$. Moreover, $|\varphi_r|\leq 1/R$. Then, by the stability inequality
\begin{align*}
    \frac{1}{R^2}\int_{\{r\leq 2R\}}1 &\geq \int_M|\nabla \phi|^2\\
    &\geq \int_{\{r\leq 2R\}}\Ric^N(\nu,\nu)+|A|^2\\
    &= \int_{\{r\leq 2R\}}|A|^2+\biRic(e_1,\nu)-\sum_{i=1}^nR^N_{i1i1}\\
    & = \int_{\{r\leq 2R\}}|A|^2+\biRic(e_1,\nu)-\Ric^M(e_1,e_1)+ H h_{11}-\sum_{i=1}^nh_{1i}^2\\
    &=\int_{\{r\leq 2R\}}\biRic(e_1,\nu)+|A|^2+ H h_{11}-\sum_{i=1}^nh_{1i}^2\\
    &\geq \int_{\{r\leq 2R\}} \biRic+\frac{5-n}{4}H^2.
\end{align*}
In the second equality, we have used the Gauss equation. In the third equality, we have used the fact that $\Ric^M(e_1,e_1)=0$. 

By the assumption,  letting $R$ tend to infinity, we obtain equalities in the above. Then
\[\biRic(e_1,\nu)+\frac{5-n}{4}H^2=0.\]
\[h_{11}=-\frac{n-3}{2}H,\ \ h_{ii}=\frac{1}{2}H, \ \text{and}\ h_{ij}=0, \ \forall\ i\neq j.\]
Then
\begin{align*}
    \Ric^N(\nu,\nu)&=\biRic(e_1,\nu)+Hh_{11}-\sum_{i=1}^nh_{1i}^2\\
    &=\frac{-n^2+5n-8}{4}H^2.
\end{align*}

\end{proof}

\bibliographystyle{alpha}
\bibliography{mybib1}     

\end{document}